\theoremstyle{thmstyleone}%
\newtheorem{theorem}{Theorem}
\newtheorem{proposition}[theorem]{Proposition}%
\newtheorem{lem}[theorem]{Lemma}
\theoremstyle{thmstyletwo}%
\theoremstyle{thmstylethree}%
\newtheorem{definition}{Definition}%
\theoremstyle{thmstylethree}%
\newtheorem{corollary}{Corollay}%
\begin{document}

\title[Nonresonance for problems involving $(p,q)$-Laplacian equations with nonlinear perturbations]{Nonresonance for problems involving $(p,q)$-Laplacian equations with nonlinear perturbations}

\author*[1]{\fnm{Emer} \sur{Lopera}}\email{edloperar@unal.edu.co}

\author[2]{\fnm{Nsoki} \sur{Mavinga}}\email{mavinga@swarthmore.edu}
\equalcont{These authors contributed equally to this work.}

\author[3]{\fnm{Diana} \sur{Sanchez}}\email{dmsanchezm@unal.edu.co}
\equalcont{These authors contributed equally to this work.}

\affil*[1]{\orgdiv{Departamento de Matem\'aticas}, \orgname{Universidad Nacional de Colombia}, \orgaddress{\street{Carrera 27 No 64-60}, \city{Manizales}, \postcode{500001}, \state{Caldas}, \country{Colombia}}}

\affil[2]{\orgdiv{Department of Mathematics and Statistics}, \orgname{Swarthmore College}, \orgaddress{\street{500 College avenue}, \city{Swarthmore}, \postcode{19081}, \state{Pennsylvania}, \country{USA}}}

\affil[3]{\orgdiv{Departamento de Matem\'aticas}, \orgname{Universidad Nacional de Colombia}, \orgaddress{\street{Carrera 27 No 64-60}, \city{Manizales}, \postcode{500001}, \state{Caldas}, \country{Colombia}}}

\abstract{
We study the solvability of $(p,q)$-Laplacian problems with nonlinear reaction terms and non-homogeneous Neumann boundary conditions. First, we provide a complete
description of the spectrum of the eigenvalue problem  involving the $(p,q)$-Laplacian  with  weights and a spectral parameter present in both the differential equation and on the boundary. Then, using  variational methods and critical point theory, we prove the existence of weak solutions for the nonlinear problem when the nonlinearities involved  remain asymptotically, in some sense,  below the first eigenvalue of  the $q$-Laplacian problem with weights and a spectral parameter present in both the differential equation and on the boundary.  We also establish an existence result for the nonlinear problem when the nonlinearities involved  remain asymptotically  below the first Steklov-Neumann eigenvalue-line, which is a line connecting the first Steklov  and  first Neumann eigenvalues for $q$-Laplacian problems  with weights and a spectral parameter present  either in the differential equation or on the boundary.
}

\keywords{$(p,q)$-Laplacian,  nonlinear pertubations, nonlinear boundary condition, nonresonance, Steklov-Neumann eigenvalue-line, critical point, variational methods}

\pacs[MSC Classification]{35A15, 35B38, 35J66, 35J92, 35P30}

\maketitle

\section{Introduction}\label{sec1}

We are concerned with the existence of weak solutions to following $(p,q)$-Laplacian boundary value problem:
\begin{equation}\label{nonlinpde}
\left\{\begin{array}{lrl}
        &-\Delta_p u-\Delta_q u +c_1(x)|u|^{p-2}u+c_2(x)|u|^{q-2}u&= f(x,u) \qquad \text{in } \Omega  \\
        &(|\nabla u|^{p-2}+|\nabla u|^{q-2})\frac{\partial u}{\partial \nu}&=g(x,u)\qquad \text{on }  \partial\Omega,
\end{array}   \right. 
\end{equation}
where $\Omega$ is a bounded domain in $\mathbb{R}^N,\, N\ge 2,$ with smooth boundary $\partial \Omega$,  $\partial/\partial\nu=\nu\cdot\nabla$ is the outward (unit) normal derivative on $\partial \Omega$, and $\Delta_{\theta}$ denotes the $\theta$-Laplacian, $-\Delta_{\theta}(u)=-\text{div}(|\nabla u|^{\theta-2}\nabla
 u)$. The nonlinearities $f(x,u)$ and  $g(x,u)$ interact with the spectrum of the following $(p,q)$-Laplacian problem:

\begin{equation}\label{eig-prob_1}
\left\{\begin{array}{lrl}
&        -\Delta_p u-\Delta_q u +c_1(x)|u|^{p-2}u+c_2(x)|u|^{q-2}u&=\lambda m(x)|u|^{q-2}u \qquad \text{in } \Omega  \\
      &  (|\nabla u|^{p-2}+|\nabla u|^{q-2})\frac{\partial u}{\partial \nu}&=\lambda \rho(x)|u|^{q-2}u\qquad \text{on }  \partial\Omega ,
\end{array}   \right. 
\end{equation}

where the spectral parameter $\lambda$ is present in both the differential equation and on the boundary (with weights). Notice that when $\rho\equiv 0$ or $m\equiv 0$, problem \eqref{eig-prob_1} reduces to the eigenvalue problems under Neumann- or Steklov-type boundary conditions, respectively. These were considered in \cite{BLMG} in the case $m\equiv 0 $ and $\rho \equiv 1$, and in \cite{MM} for the case $m\equiv 1$, $\rho\equiv 0$, and $c_1$ or $c_2$ identically zero. In \cite{BLM} the authors provide a complete description of the spectrum of Problem \eqref{eig-prob_1} when $c_1\equiv 0$.  In \cite{BMG}, the authors described the set of eigenvalues of Problem \ref{eig-prob_1} for the case when both  $c_1\equiv c_2\equiv 0$. In this paper, we discuss the spectrum of Problem \eqref{eig-prob_1}  when $c_1$ and $c_2$ are not identically zero.

The main objective of this paper is twofold. In the first part, we provide a complete description of the spectrum of problem \eqref{eig-prob_1}  when $c_1$, $ c_2,$ and the weights $ m$ and $\rho$ are  not identically zero. This result can be considered, to some extent, as  an extension of the previous work in \cite {BLM,BLMG,mavinga,MNN,MM}. 
For more studies on eigenvalue problems involving $(p,q)$-Laplacian-type operators with linear boundary conditions, see, e.g. \cite{BLBAMG,BBMG, GJMORR,PNVCVF,PP} and references therein.

In the second part of the paper, based on our results from the spectral problem, the variational methods and the critical point theory, we prove the existence of weak solutions when the nonresonance condition occurs, that is, the nonlinearities involved remain  asymptotically, in some sense, below the first eigenvalue of  the $q$-Laplacian problem with weights $(m,\rho)$ and the spectral parameter $\lambda$ is  both in the differential equation and on the boundary. This first eigenvalue of  the $q$-Laplacian problem corresponds to the infimum of the set of eigenvalues of problem \eqref{eig-prob_1}. We also establish an existence result for the nonlinear problem when the nonlinearities involved  remain asymptotically  below the first Steklov-Neumann eigenvalue-line, which is a line connecting the first Steklov  and first Neumann  eigenvalues for $q$-Laplacian problems  with weights and a spectral parameter present  either in the differential equation or on the boundary.
We shall point out that the existence result for nonresonance problem has been studied in \cite{MNN, mavinga} for the case of Laplacian operator ($p=q=2$) and in \cite{ dGMR} for the case of  $p$-Laplacian  operator ($p=q\neq 2$). In this work, we extend these results to the case of $(p,q)$-Laplacian operator when the nonlinearities involved remain asymptotically below the first eigenvalue of  the $q$-Laplacian problem with a spectral parameter present in both the differential equation and on the boundary.

In  recent years, several authors have studied problems involving the  $(p,q)$-Laplacian operator due to its wide range of applications in physical and related sciences, see for example \cite{BDAFP, CI, PW}  and the references therein. Among  recent  studies concerning the $(p,q)$-Laplacian operator, we mention   \cite{SP,MSPKW}, where the authors dealt with  $(p,q)$-Laplacian problems with parameters in  bounded and unbounded $\mathbb{R}^ N$  domains with  nonlinear reaction terms in the interior and Dirichlet boundary conditions. Further recent studies on $(p,q)$-Laplacian problems with nonlinear reaction terms in the differential equation can be found in \cite{AEMV,SATEW, WBH} and the references therein.

To the best of our knowledge, the existence of nontrivial solutions to the problem \eqref{nonlinpde} has not been studied in the context where the nonlinearities in the differential equation and on the boundary interact, in some way, with the spectrum of the $(p,q)$-Laplacian problem \eqref{eig-prob_1}, where $p$ and $q$ are such that  $p,q>1$ and $p\neq q$. 

Throughout this paper, we will assume that $p,q\in (1,\infty)$ with $p\neq q$, and define $r:=\max\{p,q\}$. The functions  $c_i: \Omega \to \mathbb{R}$ with $i=1,2$,  $f:\overline{\Omega}\times \mathbb{R}\to \mathbb{R}$ and  $g:\overline{\Omega}\times \mathbb{R}\to \mathbb{R}$ satisfy the following conditions:
\begin{itemize}
    \item[(C1)] $c_1,c_2\in L^{\infty}(\Omega)$, $  c_1 \ge 0\, $, $ c_2\ge 0\, \;a.e.  \Omega,$ and\begin{equation*}\label{c:conditions}
   \int_\Omega c_1 >0 \mbox{ and } \int_\Omega c_2 >0.
\end{equation*} 
 \item[(C2)] $m, \rho\in L^{\infty}(\Omega)$, $  m \ge 0\,\, a.e. \Omega ,\,  \rho\ge 0\, \;a.e. \partial \Omega,$ and 
\begin{equation*}\label{mrho}
  \int_\Omega m+ \int_{\partial\Omega} \rho>0  .   
\end{equation*}

    \item[(C3)] $f\in C(\bar{\Omega}\times \mathbb{R})$ and $g\in C(\bar{\Omega}\times \mathbb{R})$.
    \item[(C4)] There exist constants $a_1, a_2>0$ such that 
\begin{equation*}
    |f(x,u)|\le a_1+a_2|u|^{s_1}, \qquad \mbox{a.e.  $x\in \Omega$ and for all $u\in \mathbb{R}$},
\end{equation*}
     with    $0\le s_1<r^*-1,$
where $r^*=\frac{rN}{N-r}$ if $r<N$ and  $r^*=\infty $ if $ r\ge N$.
     \item[(C5)] There exist constants $b_1, b_2>0$ such that 
     \begin{equation*}
      |g(x,u)|\le b_1+b_2|u|^{s_2}   , \qquad \mbox{a.e.  $x\in \Omega$ and for all $u\in \mathbb{R}$},
     \end{equation*}
    with  $0\le s_2<r_*-1, $
    where   $r_*=\frac{r(N-1)}{N-r}$ if $r<N$ and $r_*=\infty$ if $r\ge N$.
    \end{itemize}

\begin{definition}\label{def-sol-nonlin}
We say that  $u\in W:=W^{1, r}(\Omega)$,   is a weak solution of problem  \eqref{nonlinpde} if
\begin{align*}
    &\int_\Omega \left(|\nabla u|^{q-2}+|\nabla u|^{p-2}\right)\nabla u \cdot \nabla \varphi  +\int_\Omega \left(c_1(x)|u|^{p-2}+c_2(x)|u|^{q-2}\right) u \varphi  \nonumber\\
&=\int_\Omega f(x,u) \varphi  + \int_{\partial\Omega} g(x,u) \varphi , \ \ \text{for all}\ \ \varphi \in W. 
\end{align*}
\end{definition}

\begin{definition}\label{def_eig}
We say that $\lambda$ is eigenvalue of problem \eqref{eig-prob_1} if there exists $u_{\lambda}\in W \setminus \{0\}$ such that 
\begin{align}\label{def-eigen-1}
&\int_\Omega \left(|\nabla u_{\lambda}|^{q-2}+|\nabla u_{\lambda}|^{p-2}\right)\nabla u_{\lambda} \cdot \nabla w  +\int_\Omega \left(c_1(x)|u_{\lambda}|^{p-2}+c_2(x)|u_{\lambda}|^{q-2}\right) u_{\lambda} w \nonumber\\
&=\lambda \left( \int_\Omega m(x) |u_{\lambda}|^{q-2} u_{\lambda} w+\int_{\partial\Omega} \rho(x) |u_{\lambda}|^{q-2} u_{\lambda} w \right), \ \ \text{for all}\ \ w\in W. 
\end{align}
\end{definition}

Now,  define the following sets:
\begin{equation*}
\begin{aligned}
&\mathcal{C}:=\left\{ w\in W|  \int_\Omega m(x)|w|^{q-2}w+ \int_{\partial\Omega} \rho(x)|w|^{q-2}w=0 \right\},
\\
&\mathcal{C}_{\theta}:=\left\{ w\in W^{1,\theta}(\Omega)|  \int_\Omega m(x)|w|^{\theta-2}w+ \int_{\partial\Omega} \rho(x)|w|^{\theta-2}w=0 \right\}
\end{aligned}
\end{equation*}
and
\begin{equation}\label{def_C_1r} \mathcal{C}_{1,\theta }:=\mathcal{C}_\theta\cap \left\{ w\in W^{1,\theta}(\Omega)|  \int_\Omega m(x)|w|^\theta+ \int_{\partial\Omega} \rho(x)|w|^\theta=1 \right\},
\end{equation}
where $\theta \in \{p,\, q\}$. We also define
\begin{equation}\label{Lambda_q}
\Lambda_q:= \Lambda_q(m,\rho)=\inf_{w \in \mathcal{C} \setminus \{0\}} \frac{\|w\|_{c_2,q}^q}{\int_\Omega m(x)|w|^{q}+ \int_{\partial\Omega} \rho(x)|w|^{q}},
\end{equation}

\begin{equation}\label{Lambda_q2}
    \tilde{\Lambda}_q:=  \tilde{\Lambda}_q(m,\rho)= \inf_{w\in \mathcal{C} \setminus  \{0\}} \frac{\frac{1}{p}\|w\|_{c_1,p}^p+\frac{1}{q}\|w\|_{c_2,q}^q}{\frac{1}{q} \left(\int_\Omega m(x)|w|^{q}+ \int_{\partial\Omega} \rho(x)|w|^{q} \right)}
\end{equation}

and
\begin{equation}\label{eigen_q}
    \lambda_q:= \lambda_q(m,\rho)= \inf_{w\in \mathcal{C}_q \setminus  \{0\}}  \frac{\|w\|_{c_2,q}^q}{ \int_\Omega m(x)|w|^{q}+ \int_{\partial\Omega} \rho(x)|w|^{q} }.
\end{equation}

Since $\Omega$ is bounded, then $W\subseteq W^{1,p}(\Omega)$ and $W\subseteq W^{1,q}(\Omega)$, and therefore $\mathcal{C} \subseteq \mathcal{C}_q$ and $\mathcal{C} \subseteq \mathcal{C}_p$. This implies that $\lambda_q\le \Lambda_q$.
    For simplicity  notation, we denote
\begin{equation}\label{def-K_q}
K _q(w):=\int_\Omega m(x)|w|^{q}+ \int_{\partial\Omega} \rho(x)|w|^{q}, \quad w\in W^{1,q}(\Omega).
\end{equation}
 and observe that from (C2),  $K_q(w)\geqslant 0$, for all $w\in W^{1,q}(\Omega)$.

 In what follows, we give a complete description of the spectrum of the eigenvalue problem \ref{eig-prob_1}.
 
 \begin{theorem}\label{Neumman_theo}
    Assume that conditions (C1) and (C2) are satisfied. Then the spectrum of the eigenvalue problem \eqref{eig-prob_1} is $(\Lambda_q, \infty)$, where $\Lambda_q$ is given in \eqref{Lambda_q}. 
     \end{theorem}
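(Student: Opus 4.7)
The plan is to establish the two inclusions $(\Lambda_q,\infty)\subseteq\Sigma$ and $\Sigma\subseteq(\Lambda_q,\infty)$, where $\Sigma$ denotes the spectrum of \eqref{eig-prob_1}. My main tool will be the $C^1$ energy functional $J_\lambda:W\to\mathbb{R}$ defined by
\[
J_\lambda(u) = \frac{1}{p}\|u\|_{c_1,p}^p + \frac{1}{q}\|u\|_{c_2,q}^q - \frac{\lambda}{q} K_q(u),
\]
whose critical points are exactly the weak eigenfunctions in the sense of Definition~\ref{def_eig}.

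For the first inclusion, fix $\lambda>\Lambda_q$. By definition \eqref{Lambda_q}, there exists $w_0\in\mathcal{C}$ with $K_q(w_0)>0$ and $\|w_0\|_{c_2,q}^q<\lambda K_q(w_0)$. Along the ray $t\mapsto tw_0$,
\[
J_\lambda(tw_0) = \frac{t^p}{p}\|w_0\|_{c_1,p}^p + \frac{t^q}{q}\bigl(\|w_0\|_{c_2,q}^q-\lambda K_q(w_0)\bigr).
\]
When $p<q$, the negative $t^q$ coefficient forces $J_\lambda(tw_0)\to-\infty$ as $t\to\infty$, while a Poincar\'e-type estimate (ensured by (C1) making $\|\cdot\|_{c_1,p}$ equivalent to the $W^{1,p}$ norm) gives a positive barrier on a small sphere of $W$: this is mountain-pass geometry. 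When $p>q$, the $t^p$ term yields global coercivity on $W=W^{1,p}(\Omega)$ while $J_\lambda(tw_0)<0$ for small $t>0$ (as $t^q>t^p$ near zero), so $J_\lambda$ attains a negative minimum. In either case, the Palais-Smale condition holds thanks to the subcritical growth and compact Sobolev/trace embeddings, and either the Mountain Pass Theorem or direct minimization yields a nontrivial critical point, which is the required eigenfunction.

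For the reverse inclusion, let $\lambda$ be an eigenvalue with eigenfunction $u\in W\setminus\{0\}$. Testing \eqref{def-eigen-1} with $w=u$ gives
\[
\|u\|_{c_1,p}^p + \|u\|_{c_2,q}^q = \lambda K_q(u),
\]
and (C1) forces $\|u\|_{c_1,p}^p>0$; consequently $K_q(u)>0$, $\lambda>0$, and $\|u\|_{c_2,q}^q/K_q(u)<\lambda$. It then suffices to produce $v\in\mathcal{C}$ with $\|v\|_{c_2,q}^q/K_q(v)\le\|u\|_{c_2,q}^q/K_q(u)$; together with \eqref{Lambda_q} this forces $\Lambda_q<\lambda$, excluding the endpoint as well. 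My candidate is the projection $v=u-t_u$ where $t_u\in\mathbb{R}$ is the unique constant with $\int_\Omega m|u-t_u|^{q-2}(u-t_u)+\int_{\partial\Omega}\rho|u-t_u|^{q-2}(u-t_u)=0$; existence and uniqueness follow from strict monotonicity in $t$, assumption (C2), and the intermediate value theorem.

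The hard part, which I expect to be the main obstacle, is the Rayleigh-quotient comparison itself: the constant shift interacts nontrivially with both the $c_2$-weighted interior term of $\|\cdot\|_{c_2,q}^q$ and the boundary term of $K_q$, so $\|v\|_{c_2,q}^q/K_q(v)\le\|u\|_{c_2,q}^q/K_q(u)$ does not follow from pointwise considerations. My plan is to combine the convexity of $s\mapsto|u-s|^q$ with the identity obtained by testing \eqref{def-eigen-1} with $w\equiv 1$, namely
\[
\int_\Omega c_1|u|^{p-2}u + \int_\Omega c_2|u|^{q-2}u = \lambda\Bigl(\int_\Omega m|u|^{q-2}u + \int_{\partial\Omega}\rho|u|^{q-2}u\Bigr),
\]
to absorb the cross terms created by the shift. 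This step is the most sensitive to the sign structure in (C1)-(C2) and ultimately provides the strict inequality $\Lambda_q<\lambda$ that rules out endpoint membership in the spectrum.
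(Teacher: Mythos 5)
Your overall strategy (both inclusions, organized around the functional $J_\lambda$) is the right one, and your treatment of the case $p>q$ — coercivity of $J_\lambda$ on $W^{1,p}(\Omega)$ together with a negative value along the ray $t\mapsto tw_0$ for small $t$ — is essentially the paper's Lemmas on coercivity and existence of a nontrivial global minimizer. But there are two genuine gaps. First, for $p<q$ the mountain-pass geometry you invoke is not justified and is doubtful: on a small sphere $\|u\|_{c_2,q}=\rho$ of $W=W^{1,q}(\Omega)$ the infimum of $\|u\|_{c_1,p}$ is $0$ (the $W^{1,p}$- and $W^{1,q}$-norms are not equivalent; concentrating $u$ on a set of small measure makes $\|\nabla u\|_p$ arbitrarily small while $\|\nabla u\|_q=1$), whereas the $q$-homogeneous part $\frac1q\|u\|_{c_2,q}^q-\frac{\lambda}{q}K_q(u)$ is strictly negative in the direction of $w_0$; so no uniform positive barrier on the sphere is evident, and in addition the Palais--Smale condition for the unconstrained, non-coercive $J_\lambda$ is not addressed (there is no Ambrosetti--Rabinowitz-type condition available here, and a bound on $\|u_n\|_{c_1,p}$ does not control $\|u_n\|_{c_2,q}$). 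The paper avoids both problems by minimizing $J_\lambda$ over the Nehari manifold $\mathcal{N}_\lambda=\{v\neq 0:\ \|v\|_{c_1,p}^p+\|v\|_{c_2,q}^q=\lambda K_q(v)\}$, on which $J_\lambda(v)=\frac{q-p}{pq}\|v\|_{c_1,p}^p$, proving that minimizing sequences are bounded, that the infimum $m_\lambda$ is positive and attained, and then removing the constraint by a Lagrange-multiplier argument.

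Second, your reverse inclusion is incomplete exactly at the step you flag: the monotonicity $\|u-t_u\|_{c_2,q}^q/K_q(u-t_u)\le\|u\|_{c_2,q}^q/K_q(u)$ under the projection onto $\mathcal{C}$ is never proved, and there is no reason to expect it, since the constant shift moves $\int_\Omega c_2|u|^q$ and $K_q(u)$ in uncontrolled directions. The paper's route is different and elementary: it introduces the auxiliary quantity $\tilde\Lambda_q$ (the infimum of $\bigl(\frac1p\|w\|_{c_1,p}^p+\frac1q\|w\|_{c_2,q}^q\bigr)/\bigl(\frac1q K_q(w)\bigr)$) and shows $\tilde\Lambda_q=\Lambda_q$ by evaluating the quotient at $tw$ and letting $t\to 0^+$ or $t\to\infty$ so that the $p$-homogeneous term disappears; this yields the inequality $\Lambda_q\le\|v\|_{c_2,q}^q/K_q(v)$ needed for the comparison. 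Then testing \eqref{def-eigen-1} with $w=u_\lambda$ gives $\lambda=\bigl(\|u_\lambda\|_{c_1,p}^p+\|u_\lambda\|_{c_2,q}^q\bigr)/K_q(u_\lambda)\le\Lambda_q\le\|u_\lambda\|_{c_2,q}^q/K_q(u_\lambda)$, forcing $\|u_\lambda\|_{c_1,p}=0$, hence $u_\lambda=0$, a contradiction that also excludes the endpoint $\lambda=\Lambda_q$. You should replace the projection-plus-convexity plan by this scaling identity.
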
  

\begin{corollary}\label{coro-neu-stek}
Suppose that the assumptions of Theorem  \ref{Neumman_theo} hold. 
\begin{enumerate}
    \item[(i)] If $\rho\equiv 1$ and $m \equiv 0$ then the spectrum of the problem \eqref{eig-prob_1}, $(\Gamma_q, \infty)$, is the spectrum of the Steklov-like eigenvalue problem for the $(p,q)$-Laplacian,  where $\Gamma_q:= \Lambda_q(0,1)$.
    \item[(ii)] If $\rho \equiv 0$ and $m \equiv 1$ then the spectrum of the problem \eqref{eig-prob_1}, $(\Pi_q, \infty)$, is the spectrum of the Neumann-like eigenvalue problem for the $(p,q)$-Laplacian, where $\Pi_q:= \Lambda_q(1,0)$.
\end{enumerate}
\end{corollary}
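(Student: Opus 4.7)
The plan is to recognize both parts as immediate specializations of Theorem~\ref{Neumman_theo} to the particular weight pairs $(m,\rho)=(0,1)$ and $(m,\rho)=(1,0)$. For each case I need only verify that conditions (C1) and (C2) continue to hold and then reinterpret problem \eqref{eig-prob_1} under the chosen weights as the announced Steklov- or Neumann-type eigenvalue problem; the characterization of the spectrum as $(\Lambda_q(m,\rho),\infty)$ is then given to me for free by the theorem.

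For part (i), I take $m\equiv 0$ and $\rho\equiv 1$ and check (C2) by noting that $\int_\Omega m+\int_{\partial\Omega}\rho = |\partial\Omega|>0$, since $\Omega$ has smooth boundary and therefore positive surface measure. Condition (C1) involves only $c_1,c_2$ and is untouched. Substituting these weights into \eqref{eig-prob_1} makes the right-hand side of the interior equation vanish while the boundary condition becomes $(|\nabla u|^{p-2}+|\nabla u|^{q-2})\,\partial u/\partial\nu = \lambda|u|^{q-2}u$ on $\partial\Omega$, which is exactly the Steklov-like eigenvalue problem for the $(p,q)$-Laplacian with potentials $c_1,c_2$. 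Theorem~\ref{Neumman_theo} then gives that the spectrum equals $(\Lambda_q(0,1),\infty)=(\Gamma_q,\infty)$.

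Part (ii) is entirely symmetric. Taking $m\equiv 1$ and $\rho\equiv 0$, condition (C2) reduces to $|\Omega|>0$, (C1) is again unaffected, and \eqref{eig-prob_1} degenerates to $-\Delta_p u-\Delta_q u+c_1(x)|u|^{p-2}u+c_2(x)|u|^{q-2}u = \lambda|u|^{q-2}u$ in $\Omega$ together with the homogeneous condition $(|\nabla u|^{p-2}+|\nabla u|^{q-2})\,\partial u/\partial\nu = 0$ on $\partial\Omega$, that is, the Neumann-like eigenvalue problem. Theorem~\ref{Neumman_theo} identifies its spectrum as $(\Lambda_q(1,0),\infty)=(\Pi_q,\infty)$.

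There is no substantive obstacle in this proof: the whole reason condition (C2) was formulated in the integrated form $\int_\Omega m+\int_{\partial\Omega}\rho>0$, rather than requiring both terms to be positive separately, is precisely to permit these two degenerate choices of weights. The corollary is therefore a direct specialization of Theorem~\ref{Neumman_theo}, and the only thing to say explicitly in the write-up is the verification of (C2) in each case together with the reinterpretation of the boundary value problem.
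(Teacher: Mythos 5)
Your proof is correct and coincides with the paper's (implicit) treatment: the corollary is stated as a direct specialization of Theorem~\ref{Neumman_theo} to the weight pairs $(m,\rho)=(0,1)$ and $(1,0)$, with the only point to check being that condition (C2) still holds, which you verify via $|\partial\Omega|>0$ and $|\Omega|>0$ respectively. No gaps.
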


 We shall mention that Part (i) of Corollary \ref{coro-neu-stek}  is also treated in \cite[Theorem 1]{BLMG}. Part (ii) of Corollary \ref{coro-neu-stek},  is discussed in \cite[Theorem 1.1]{MM} for the case $c_1\equiv c_2\equiv 0$. In this paper, we discuss the case where $m$ and $\rho $ are not necessarily identically zero, and $c_1\not\equiv 0$ and $c_2\not\equiv 0$. 
 
 Furthermore, we establish the following existence results for Problem \eqref{nonlinpde} when the nonresonance occurs.

 \begin{theorem}\label{main_theo}
    Assume that (C1)-(C5) hold. Let $F(x,u):=\int_0^uf(x,s)ds$  and $G(x,u):=\int_0^ug(x,s)ds$ 
 be such that there exist $\lambda,\mu \in \mathbb{R}$ which satisfy 
\begin{equation}\label{hypoth_FG}
        \limsup_{|u|\to \infty}\frac{qF(x,u)}{|u|^q}\leq \lambda m(x) \;\mbox{ and }\;   \limsup_{|u|\to \infty}\frac{qG(x,u)}{|u|^q}\leq \mu \rho(x)
    \end{equation}
    uniformly for $x\in \bar{\Omega}$ with $\max\{\lambda, \mu\}<\Lambda_q $.  \\
    Then problem \eqref{nonlinpde} has at least one weak solution $u\in W^{1,r}(\Omega)$ with $r=\max\{p,q\}$.
\end{theorem}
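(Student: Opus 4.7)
The plan is to apply the direct method of the calculus of variations to the energy functional
\begin{equation*}
J(u) := \frac{1}{p}\int_\Omega|\nabla u|^p + \frac{1}{q}\int_\Omega|\nabla u|^q + \frac{1}{p}\int_\Omega c_1|u|^p + \frac{1}{q}\int_\Omega c_2|u|^q - \int_\Omega F(x,u) - \int_{\partial\Omega} G(x,u)
\end{equation*}
on $W=W^{1,r}(\Omega)$. Conditions (C3)--(C5) together with the compact embeddings $W\hookrightarrow L^{s_1+1}(\Omega)$ and $W\hookrightarrow L^{s_2+1}(\partial\Omega)$ (compact since $s_1<r^*-1$ and $s_2<r_*-1$) give $J\in C^1(W,\mathbb{R})$, whose critical points are exactly the weak solutions of \eqref{nonlinpde} in the sense of Definition \ref{def-sol-nonlin}. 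Convexity of the gradient and weighted $L^p$/$L^q$ terms plus weak continuity of the $F$- and $G$-functionals (a consequence of the same compact embeddings) make $J$ weakly sequentially lower semicontinuous, so it suffices to prove coercivity and invoke the direct method.

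Coercivity is the principal obstacle. Choose $\varepsilon>0$ with $\max\{\lambda,\mu\}+\varepsilon<\Lambda_q$. From \eqref{hypoth_FG} and the continuity of $f,g$ one extracts $C_\varepsilon>0$ such that
\begin{equation*}
qF(x,u)\le \lambda\, m(x)|u|^q+\varepsilon|u|^q+C_\varepsilon \quad\text{and}\quad qG(x,u)\le \mu\,\rho(x)|u|^q+\varepsilon|u|^q+C_\varepsilon.
\end{equation*}
Suppose, for contradiction, that $\|u_n\|_W\to\infty$ while $J(u_n)$ stays bounded above, and set $v_n:=u_n/\|u_n\|_W$. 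Passing to a subsequence, $v_n\rightharpoonup v$ in $W$ and $v_n\to v$ strongly in $L^q(\Omega)$ and $L^q(\partial\Omega)$. If $r=p>q$, one divides $J(u_n)$ by $\|u_n\|_W^p$: the $q$-contributions and the $F,G$-integrals are $o(1)$ because $\|u_n\|_W^{q-p}\to 0$, whereas the $p$-part stays bounded below by $\alpha\|v_n\|_{W^{1,p}}^p$ for some $\alpha>0$, a Poincaré-type bound derived from (C1) via a standard compactness argument, producing a strictly positive limit inferior and the desired contradiction.

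The delicate case is $r=q>p$. Dividing $J(u_n)$ by $\|u_n\|_W^q$ makes the $p$-terms $o(1)$ (same power-mismatch argument), and combining the bounds on $F,G$ with weak lower semicontinuity of $w\mapsto\|\nabla w\|_q^q$ and strong $L^q$-convergence of $v_n$ yields, after sending $\varepsilon\to 0$,
\begin{equation*}
\|\nabla v\|_q^q+\int_\Omega c_2|v|^q \le \lambda\int_\Omega m|v|^q+\mu\int_{\partial\Omega}\rho|v|^q \le \max\{\lambda,\mu\}\, K_q(v).
\end{equation*}
Together with $\max\{\lambda,\mu\}<\Lambda_q$ and Theorem \ref{Neumman_theo}, which identifies $\Lambda_q$ with the infimum of the spectrum of \eqref{eig-prob_1}, this forces $v\equiv 0$: if $K_q(v)=0$ the inequality yields $\nabla v=0$ and $\int_\Omega c_2|v|^q=0$, hence $v=0$ by (C1); otherwise the Rayleigh quotient of $v$ would strictly undercut $\Lambda_q$, contradicting the infimum characterization. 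With $v=0$, the strong convergence $v_n\to 0$ in $L^q(\Omega)$ combined with $\|v_n\|_W=1$ drives $\|\nabla v_n\|_q^q\to 1$, so $\liminf_n J(u_n)/\|u_n\|_W^q\ge 1/q>0$, the desired contradiction. Coercivity and weak sequential lower semicontinuity then yield a minimizer of $J$, which is a weak solution of \eqref{nonlinpde}.
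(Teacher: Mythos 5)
Your proposal is correct, but it takes a genuinely different route from the paper on both of the main points. For existence you use the direct method: coercivity plus weak sequential lower semicontinuity of $J$ (the latter from convexity of the gradient and weighted terms together with the compact embeddings, which make the $F$- and $G$-functionals weakly continuous). The paper instead verifies the Palais--Smale condition — which requires the technical Proposition \ref{bound-PS}, an $(S_+)$-type compactness argument for the $(p,q)$-Laplacian — and then invokes \cite[Theorem 2.7]{PR}; your route bypasses Proposition \ref{bound-PS} entirely, at the modest cost of justifying weak lower semicontinuity. For coercivity you argue by contradiction with the normalized sequence $v_n=u_n/\|u_n\|_W$, treating $r=p>q$ (where, as you implicitly observe and as Lemma \ref{J_lambda-coer} confirms, the spectral hypothesis is not even needed) separately from $r=q>p$, where the weak limit $v$ is forced to vanish by $\max\{\lambda,\mu\}<\Lambda_q$ and the normalization then gives the contradiction. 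The paper instead derives the quantitative pointwise estimate \eqref{Ju_upperbound}, $J(u)\geqslant\bigl(1-\varepsilon C_q-\kappa/\Lambda_q\bigr)\bigl(\tfrac1p\|u\|_{c_1,p}^p+\tfrac1q\|u\|_{c_2,q}^q\bigr)-\hat M_\varepsilon$ for all $u\in W$, which it then reuses verbatim to bound Palais--Smale sequences; your argument is qualitative but more self-contained. One caveat, which you share with the paper rather than introduce: when $K_q(v)>0$ you conclude a contradiction from $\|v\|_{c_2,q}^q/K_q(v)\leqslant\max\{\lambda,\mu\}<\Lambda_q$, which uses the inequality $\Lambda_q\leqslant\|w\|_{c_2,q}^q/K_q(w)$ for arbitrary $w$ with $K_q(w)>0$, whereas \eqref{Lambda_q} defines $\Lambda_q$ as an infimum only over the constrained set $\mathcal{C}$; the paper relies on the same unconstrained form (in \eqref{destilde}, \eqref{eigen-4} and \eqref{Ju_upperbound}), so if you adopt this step you should, like the paper, make explicit why the restriction to $\mathcal{C}$ can be dropped.
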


In the following theorem, we state the existence result when the nonlinearities involved asymptotically stay, in some sense, below the line connecting both the infimum of the set of eigenvalues of problem \eqref{eig-prob_1} with $m\equiv1$, $\rho \equiv0$, and the infimum of the set of eigenvalues of problem \eqref{eig-prob_1} with $m\equiv0$, $\rho \equiv 1$, as depicted in Figure \ref{recta1-clase1}.
\begin{theorem}\label{main_theo2}
    Assume that (C1)-(C5) are satisfied. Suppose that there exist $\lambda,\mu \in \mathbb{R}$ such that 
    \begin{equation}\label{hypoth_FG_2}
        \limsup_{|u|\to \infty}\frac{qF(x,u)}{|u|^q}\leq \lambda<\Pi_q \;\mbox{ and }\;   \limsup_{|u|\to \infty}\frac{qG(x,u)}{|u|^q}\leq \mu<\Gamma_q 
    \end{equation}
    uniformly for $x\in \bar{\Omega}$ with 
    \begin{equation}\label{hypoth_FG_3}
     \Pi_q \mu+\Gamma_q\lambda<\Pi_q \Gamma_q  , 
    \end{equation}
     where  $F(x,u):=\int_0^uf(x,s)ds$, $G(x,u):=\int_0^ug(x,s)ds$,  $\Pi_q:= \Lambda_q(1,0)$ and $\Gamma_q:= \Lambda_q(0,1)$.  \\
    Then problem \eqref{nonlinpde} has at least one weak solution $u\in W^{1,r}(\Omega)$ with $r=\max\{p,q\}$. 
\end{theorem}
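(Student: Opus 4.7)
The plan is to apply the direct method of the calculus of variations to the associated energy functional
\begin{equation*}
J(u):=\frac{1}{p}\|u\|_{c_1,p}^p+\frac{1}{q}\|u\|_{c_2,q}^q-\int_\Omega F(x,u)-\int_{\partial\Omega}G(x,u),\qquad u\in W=W^{1,r}(\Omega).
\end{equation*}
Under (C3)--(C5), $J$ is of class $C^1$ on $W$ and its critical points are precisely the weak solutions of (\ref{nonlinpde}) in the sense of Definition~\ref{def-sol-nonlin}. The convexity of $u\mapsto\|u\|_{c_1,p}^p$ and $u\mapsto\|u\|_{c_2,q}^q$, combined with the compact embeddings $W\hookrightarrow L^{s_1+1}(\Omega)$ and $W\hookrightarrow L^{s_2+1}(\partial\Omega)$ granted by the strict subcritical inequalities in (C4)--(C5), makes $J$ sequentially weakly lower semicontinuous on $W$. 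Once coercivity of $J$ is in hand, the direct method furnishes a minimizer $u^*\in W$ which is a critical point of $J$, and hence a weak solution of (\ref{nonlinpde}).

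The whole argument therefore reduces to establishing coercivity of $J$. From (\ref{hypoth_FG_2}), for every $\epsilon>0$ there exists $C_\epsilon>0$ such that $F(x,u)\le\tfrac{\lambda+\epsilon}{q}|u|^q+C_\epsilon$ and $G(x,u)\le\tfrac{\mu+\epsilon}{q}|u|^q+C_\epsilon$, so that
\begin{equation*}
J(u)\ge \frac{1}{p}\|u\|_{c_1,p}^p+\frac{1}{q}\Bigl(\|u\|_{c_2,q}^q-(\lambda+\epsilon)\int_\Omega|u|^q-(\mu+\epsilon)\int_{\partial\Omega}|u|^q\Bigr)-\widetilde C_\epsilon.
\end{equation*}
In the subcase $r=p>q$, the $p$-part $\frac{1}{p}\|u\|_{c_1,p}^p$ is equivalent to $\|u\|_{W^{1,r}}^p$ and dominates all the remaining terms (which grow at most like $\|u\|_{W^{1,r}}^q$ with $q<r$); coercivity is immediate. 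The delicate subcase is $r=q>p$, where coercivity of $J$ reduces to the Poincar\'e-type spectral inequality
\begin{equation*}
\|u\|_{c_2,q}^q-(\lambda+\epsilon)\int_\Omega|u|^q-(\mu+\epsilon)\int_{\partial\Omega}|u|^q\ge \delta\,\|u\|_{W^{1,q}}^q\qquad \forall u\in W^{1,q}(\Omega),
\end{equation*}
for some $\delta>0$ and all sufficiently small $\epsilon>0$.

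The main obstacle is proving this inequality, for which I would argue by contradiction. If it fails, pick a normalized sequence $u_n$ with $\|u_n\|_{W^{1,q}}=1$ along which the left-hand side tends to a non-positive limit and extract a weak limit $u_n\rightharpoonup v$ in $W^{1,q}$. The compactness of $W^{1,q}\hookrightarrow L^q(\Omega)$ and of the trace $W^{1,q}\hookrightarrow L^q(\partial\Omega)$, together with the sequential weak lower semicontinuity of $\|\cdot\|_{c_2,q}^q$, yields in the limit
\begin{equation*}
\|v\|_{c_2,q}^q\le \lambda\int_\Omega|v|^q+\mu\int_{\partial\Omega}|v|^q.
\end{equation*}
The case $v=0$ is ruled out directly: it forces $\|\nabla u_n\|_q\to 0$ together with $u_n\to 0$ in $L^q(\Omega)$, so $\|u_n\|_{W^{1,q}}\to 0$, contradicting the normalization. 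The remaining, most delicate step is to exclude the case $v\ne 0$: this has to be done by invoking the spectral description of $\Pi_q=\Lambda_q(1,0)$ and $\Gamma_q=\Lambda_q(0,1)$ from Theorem~\ref{Neumman_theo} in conjunction with the Steklov-Neumann line hypothesis (\ref{hypoth_FG_3}). Heuristically, the set of pairs $(\lambda,\mu)$ for which such a $v\ne 0$ can exist is closed and convex (because it is the set where the functional $u\mapsto \|u\|_{c_2,q}^q-\lambda\int_\Omega|u|^q-\mu\int_{\partial\Omega}|u|^q$ fails to be positive definite on $W^{1,q}\setminus\{0\}$), and the hypotheses $\lambda<\Pi_q$, $\mu<\Gamma_q$ together with (\ref{hypoth_FG_3}) place $(\lambda,\mu)$ strictly inside the open triangular region in the first quadrant cut off by the segment from $(\Pi_q,0)$ to $(0,\Gamma_q)$, which by convexity and the concavity of the first joint eigenvalue curve through these two boundary points lies strictly inside the coercivity region. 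Converting this heuristic into a rigorous exclusion of $v\ne 0$ is the technical heart of the proof.
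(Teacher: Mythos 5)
Your overall strategy---minimizing the functional $J$ of \eqref{def_J}---is the same as the paper's, but the argument has a genuine gap precisely at the point you yourself flag as ``the technical heart.'' In the case $r=q>p$ you reduce coercivity to the inequality $\|u\|_{c_2,q}^q-(\lambda+\epsilon)\|u\|_q^q-(\mu+\epsilon)\|u\|_{\partial\Omega,q}^q\ge\delta\|u\|_{W^{1,q}}^q$, and in the ensuing contradiction argument the exclusion of a nontrivial weak limit $v\neq 0$ is left to a heuristic about the convexity of a ``non-coercivity region'' and the concavity of a ``first joint eigenvalue curve.'' Neither object is defined or studied (here or in the paper), the concavity claim is far from obvious for the $(p,q)$-operator, and---decisively---the hypothesis \eqref{hypoth_FG_3} is never used in any actual computation. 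Since \eqref{hypoth_FG_3} is exactly what distinguishes this theorem from Theorem \ref{main_theo}, a proof that does not invoke it quantitatively cannot be complete. Two further issues: by discarding the term $\frac{1}{p}\|u\|_{c_1,p}^p$ before bringing in the spectral constants, you are asking for an inequality strictly stronger than what $\Pi_q$ and $\Gamma_q$ deliver (these coincide with $\tilde\Lambda_q(1,0)$ and $\tilde\Lambda_q(0,1)$ by Lemma \ref{0<lambda_q}(ii), whose Rayleigh quotients keep the $p$-energy in the numerator, and the paper's estimates crucially retain that term); and you replace the Palais--Smale verification by an asserted, unproved sequential weak lower semicontinuity of $J$, which for the boundary term $\int_{\partial\Omega}G(x,u)$ also requires the compactness of the trace.

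For comparison, the paper closes the coercivity step in a much more elementary way, with no spectral inequality and no limiting argument. From $\Pi_q=\tilde\Lambda_q(1,0)$ and $\Gamma_q=\tilde\Lambda_q(0,1)$ it derives the two absorption inequalities \eqref{J_coerciv_2}--\eqref{J_coerciv_3}, each bounding the corresponding lower-order term by a multiple of the \emph{full} energy $\frac{1}{p}\|u\|_{c_1,p}^p+\frac{1}{q}\|u\|_{c_2,q}^q$; adding them leaves the coefficient $1-\frac{\lambda+\varepsilon}{\Pi_q}-\frac{\mu+\varepsilon}{\Gamma_q}$, which is positive exactly because of \eqref{hypoth_FG_3} (this is where the Steklov--Neumann line enters, linearly). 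A four-case analysis on the signs of $\lambda+\varepsilon$ and $\mu+\varepsilon$ then gives coercivity, the same estimates give boundedness of Palais--Smale sequences, and Proposition \ref{bound-PS} supplies precompactness so that the minimization theorem applies. If you want to keep your route, the cleanest repair is to replace the contradiction argument by this linear absorption step.
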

Notice that the inequality \eqref{hypoth_FG} implies that, in some sense, some ratios of the nonlinearities involved
stay asymptotically in the plane-domain
\begin{equation*}
    R:=\{(\mu, \lambda)\in \mathbb{R}^2\,|\, \lambda <\Pi_q,\ \  \mu<\Gamma_q ,\ \  \Pi_q  \mu+\Gamma_q\lambda<\Pi_q \Gamma_q \},
\end{equation*}

which is a shaded region depicted in Figure \ref{recta1-clase1}.

\begin{figure}
    \centering
\begin{tikzpicture}
\draw[fill=gray!50!white]    (-2.7,2) -- ++(2.7,0) -- ++(2,-2) -- ++(0,-2.7)-- ++(-4.7,0);
\draw[black,very thick, ->] (0,-2.7) -- (0,5) node[right]{ $\mu$};
\draw[black,very thick, ->] (-2.7,0) -- (5,0) node[right]{ $\lambda$};
\draw[blue, very thick,dashed] (-2.5,4.5) -- (4.7,-2.7);
\draw[blue, very thick,dashed] (-2.7,2) -- (0,2);
\draw[blue,very  thick,dashed] (2,0) -- (2,-2.8);
\filldraw[black] (-0.2,-0.2) node[]{ $0$};
\filldraw[black] (-1.2,-1.2) node[]{ $R$};
\filldraw[black,very thick] (1.7,-0.3) node[]{ $\Pi_q$};
\filldraw[black,very thick] (-0.3,1.7) node[]{ $\Gamma_q$};
\filldraw[blue,very thick] (5,-1.2) node[]{\small{$\Pi_q \mu+\Gamma_q\lambda=\Pi_q \Gamma_q$}};
 \end{tikzpicture}
  \caption{Shaded region $R$} 
\end{figure}
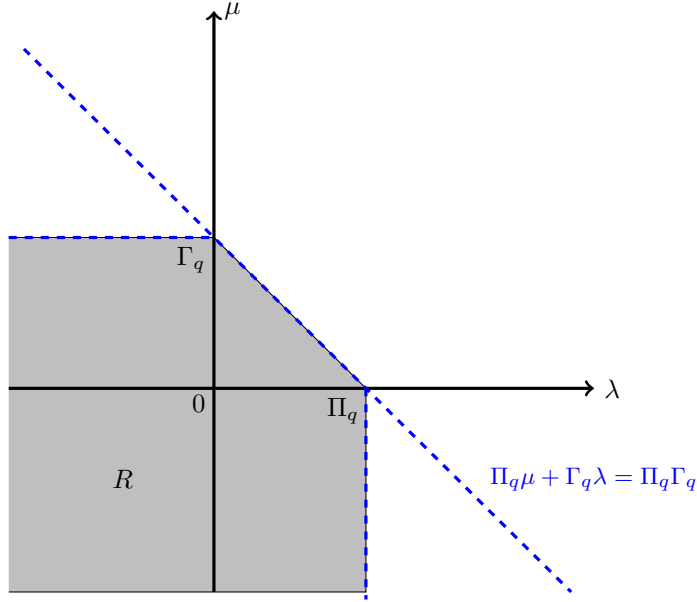\label{recta1-clase1}

The paper is organized as follows: Section 2 presents some preliminary and auxiliary results which will be needed in the proofs of the main results. Section 3 is devoted  to the description of the spectrum of the  problem \eqref{eig-prob_1}, where we prove Theorem \eqref{Neumman_theo}.
Finally, Section 4 deals with the nonlinear perturbation problems \eqref{nonlinpde} and the proofs of  Theorems \eqref{main_theo} and \eqref{main_theo2}.


\section{Preliminary and auxiliary results}

We will make use of the Sobolev space $W^{1,\theta} (\Omega)$ (with  $\theta \in (1,\infty)$), endowed with the  norm  \begin{equation*}
 \|u\|_{c_i,\theta}:=\left( \int_\Omega |\nabla u|^{\theta}+\int_\Omega c_i(x)|u|^{\theta}\right)^\frac{1}{\theta}. 
\end{equation*}

Because of the assumption {\rm (C1)}, we have that  $\|\cdot\|_{c_i,\theta}$, with $i=1,2$,  $1\leqslant \theta <\infty$, is equivalent to the usual norm of the Sobolev space $W^{1,\theta}(\Omega)$, $\displaystyle  \|u\|:= \left(\int_\Omega |\nabla u|^{\theta}+\int_\Omega |u|^{\theta}\right)^{\frac{1}{\theta}}$, see e.g. \cite[Lemma 1]{BLMG}. 

In addition to the Sobolev spaces, we use the Lebesgue spaces $L^\theta (\Omega)$ and $L^\theta (\partial \Omega)$, with norms 
\begin{equation*}
    \|u\|_\theta :=\left(\int_{\Omega} |u|^{\theta}\right)^\frac{1}{\theta} \qquad \mbox{ and } \qquad     \|u\|_{\partial \Omega, \theta}:=\left(\int_{\partial\Omega} |u|^{\theta} \right)^\frac{1}{\theta}
\end{equation*}
respectively. We shall consider  $L^\infty (\Omega)$ and $L^\infty (\partial\Omega)$ with the usual norms $\|\cdot \|_\infty$ and $\|\cdot \|_{\partial\Omega , \infty}$ respectively.

We also use the trace inequality  
\begin{equation}\label{trace_ineq}
    \displaystyle \|u\|_{\partial\Omega, \beta} \leqslant C_b \|u\|_{c_i,\theta},\ \ \text{for all}\ \ u\in W^{1,\theta}(\Omega),
\end{equation}  where $\beta\le \frac{(N-1)\theta}{N-\theta}$ if $1\le \theta <N$ and $\beta\ge 1$ if $\theta \ge N$, and $C_b$ is a positive constant (see e.g. \cite{KJF}, Theorem 6.4.1, page 316). Furthermore the embedding $W^{1,\theta}(\Omega) \hookrightarrow L^{\beta}(\Omega)$ is compact if $1<\theta<N$ and $1\leqslant \beta <\frac{(N-1)\theta}{N-\theta}$ (see \cite{KJF} section 6.10.5 page 344).

In this paper, we use  variational arguments. Therefore, in In the next propositions, we prove the continuous differentiability of the functionals 
$$I_1(u)=\frac{1}{p}\int_{\Omega} |\nabla u|^p \; \mbox{ and } \; I_2(u)=\frac{1}{p}\int_{E} c(x)|u|^p,$$
where $E$ is any measurable and bounded set in $\mathbb{R}^N$.

\begin{proposition}\label{prop-nabla}
Let $p>1$ and consider the functional $I_1:W^{1,p}(\Omega)\to \mathbb{R}$ defined by
\begin{equation*}
I_1(u)=\frac{1}{p}\int_{\Omega} |\nabla u|^p.
\end{equation*}
Then, $I_1\in C^1(W^{1,p}(\Omega))$ and 
 \begin{equation*}
I_1'(u)v=\int_{\Omega} |\nabla u|^{p-2} \nabla u\cdot\nabla v \qquad  \ \ \forall  u, v \in W^{1,p}(\Omega) \,.
\end{equation*}
\end{proposition}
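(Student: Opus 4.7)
The plan is to establish continuous Fr\'echet differentiability in two stages: first compute the G\^ateaux derivative and verify it has the claimed form, and then show the resulting derivative map $u\mapsto I_1'(u)$ is continuous from $W^{1,p}(\Omega)$ into its topological dual, which automatically upgrades G\^ateaux differentiability to $C^1$.

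For the G\^ateaux step, fix $u,v\in W^{1,p}(\Omega)$ and write the difference quotient as
\[
\frac{I_1(u+tv)-I_1(u)}{t}=\int_\Omega \frac{|\nabla u+t\nabla v|^p-|\nabla u|^p}{pt}.
\]
Applying the mean value theorem to $s\mapsto |\nabla u+s\nabla v|^p/p$, the integrand equals
\[
|\nabla u+\tau_t\nabla v|^{p-2}(\nabla u+\tau_t\nabla v)\cdot\nabla v
\]
for some measurable $\tau_t(x)\in(0,t)$, and for $|t|\le 1$ is bounded in modulus by $(|\nabla u|+|\nabla v|)^{p-1}|\nabla v|$. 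Young's inequality yields
\[
(|\nabla u|+|\nabla v|)^{p-1}|\nabla v|\le C_p\bigl(|\nabla u|^p+|\nabla v|^p\bigr)\in L^1(\Omega),
\]
so Lebesgue's dominated convergence theorem produces $\lim_{t\to 0}t^{-1}(I_1(u+tv)-I_1(u))=\int_\Omega |\nabla u|^{p-2}\nabla u\cdot\nabla v$, which is moreover linear and, by H\"older, continuous in $v$.

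For continuity of $I_1'$, it suffices to show that $\Phi(\xi):=|\xi|^{p-2}\xi$ induces a continuous Nemytskii operator $L^p(\Omega;\mathbb{R}^N)\to L^{p'}(\Omega;\mathbb{R}^N)$ with $p'=p/(p-1)$, since by H\"older
\[
|I_1'(u_n)v-I_1'(u)v|\le \|\Phi(\nabla u_n)-\Phi(\nabla u)\|_{p'}\,\|\nabla v\|_p.
\]
Given $u_n\to u$ in $W^{1,p}(\Omega)$, extract from any subsequence a further subsequence with $\nabla u_{n_k}\to\nabla u$ a.e.\ and $|\nabla u_{n_k}|\le h$ for some $h\in L^p(\Omega)$ (standard consequence of $L^p$ convergence). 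Continuity of $\xi\mapsto|\xi|^{p-2}\xi$ on $\mathbb{R}^N$ gives $\Phi(\nabla u_{n_k})\to \Phi(\nabla u)$ a.e., while $|\Phi(\nabla u_{n_k})|\le h^{p-1}\in L^{p'}(\Omega)$ supplies an $L^{p'}$ dominant. Dominated convergence along the subsequence plus the usual uniqueness-of-limit argument then forces $\Phi(\nabla u_n)\to\Phi(\nabla u)$ in $L^{p'}$.

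The main point to watch is the singular regime $1<p<2$, where $\xi\mapsto|\xi|^{p-2}\xi$ fails to be locally Lipschitz near the origin; however, it remains continuous at every point of $\mathbb{R}^N$ (with the convention $|0|^{p-2}0=0$), and both the Young-type domination used in the G\^ateaux step and the pointwise-plus-domination argument used for continuity of $\Phi$ go through unchanged, so no separate treatment of that case is needed.
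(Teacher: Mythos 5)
Your proposal is correct and follows essentially the same route as the paper: G\^ateaux differentiability via the mean value theorem plus dominated convergence, followed by continuity of the derivative through a Nemytskii-operator argument with a.e.\ convergent, dominated subsequences of the gradients. The only (welcome) refinements are your use of Young's inequality to get the $L^1$ dominating function in one step and your explicit subsequence-plus-uniqueness-of-limit argument to upgrade convergence along a subsequence to convergence of the full sequence, a point the paper's proof passes over silently.
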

\begin{proof}
    First, we prove that $I_1$ is Gateaux differentiable. We have to prove that for fixed $u,v\in W^{1,p}(\Omega)$
\begin{equation*}
   \lim_{t\to 0}\frac{1}{p}\int_{\Omega}\frac{|\nabla u+t\nabla v|^p-|\nabla u|^p}{t} 
   =\int_{\Omega} |\nabla u|^{p-2} \nabla u\cdot\nabla v  \,. 
\end{equation*}
Then we will have 
\begin{equation}\label{I_1_G}
    (I_1)'_G(u)v=\int_{\Omega} |\nabla u|^{p-2} \nabla u\cdot\nabla v   \,.
\end{equation}
In fact, given $x\in \Omega$ such that $\nabla u (x)$ and $\nabla v(x)$ are defined, and $0<|t|<1,$ by the mean value theorem there exists $|\theta(x)|\in [0,|t|]$ such that
\begin{align}\label{nabla_uv_1}
\left|\frac{|\nabla u(x)+t\nabla v(x)|^p-|\nabla u(x)|^p}{pt}\right|&=||\nabla u(x)+\theta(x)\nabla v(x)|^{p-2}(\nabla u(x)+\theta(x)\nabla v(x)) \nabla v(x)| \nonumber\\ &\leqslant |\nabla u(x)+\theta(x)\nabla v(x)|^{p-1}|\nabla v(x)|\nonumber
\\ &\leqslant C_p(|\nabla v(x)||\nabla u(x)|^{p-1}+|\nabla v(x)|^p)\,,
\end{align}
where $C_p>0$ is such that $(a+b)^{p-1}\leqslant C_p(a^{p-1}+b^{p-1})$ for all $a,b\geqslant 0$. 
Since $v\in W^{1,p}(\Omega)$, then $|\nabla v|\in L^p(\Omega)$, then by the Hölder inequality we have
\begin{equation}\label{nabla_uv_2}
\int_{\Omega}|\nabla v||\nabla u|^{p-1} \leqslant \|\nabla v\|_p\|\nabla u\|_p^{p-1}<\infty .
\end{equation}
Hence, from \eqref{nabla_uv_2} $|\nabla u+\theta\nabla v|^{p-1}|\nabla v|\in L^{1}(\Omega).$ From \eqref{nabla_uv_1} and using the Lebesgue dominated convergence theorem we have

    \begin{align*}
        \lim_{t\to 0}\frac{1}{p}\int_{\Omega}\frac{|\nabla u+t\nabla v|^p-|\nabla u|^p}{t}&= \int_{\Omega}\lim_{t\to 0}\frac{|\nabla u+t\nabla v|^p-|\nabla u|^p}{pt} \nonumber \\&=\int_{\Omega}\lim_{\theta\to 0}|\nabla u(x)+\theta(x)\nabla v(x)|^{p-2}(\nabla u(x)+\theta(x)\nabla v(x)) \nabla v(x) \nonumber \\&=\int_{\Omega} |\nabla u|^{p-2} \nabla u\cdot\nabla v  \,.
\end{align*}

This proves \eqref{I_1_G}.
We complete the proof by checking that the function $(I_1)'_G: W^{1,p}(\Omega)\to W^{-1,p'}(\Omega)$ is continuous. In fact, we take a sequence $(u_k)\subseteq W^{1,p}(\Omega)$ such that $u_k\to u$ in $W^{1,p}(\Omega)$, as $k\to \infty$. 
We can assume that, up to a subsequence, $|\nabla u_k|\to|\nabla u|$, $|\nabla u_k|\to |\nabla u|$ and $u_k\to u$ in $L^p(\Omega)$ as $k\to \infty$, and there exists $w, \hat{w}\in L^p(\Omega)$ such that $|\nabla u_k(x)|\leqslant w(x)$ and $ |u_k(x)|\leqslant \hat{w}(x)$ a.e $x\in \Omega$, for all $k$. Let $v\in W^{1,p}(\Omega)$ with $\|v\|_{1,p}\leqslant 1$. Then, by the Holder inequality
    \begin{align}\label{cont_gate_2}
        |((I_2)'_G(u_k)-(I_2)'_G(u))v| & \leqslant \int_\Omega ||\nabla u_k|^{p-2}\nabla u_k - |\nabla u|^{p-2}\nabla u||\nabla v| \nonumber \\
        & \leqslant \left( \int_\Omega ||\nabla u_k|^{p-2}\nabla u_k - |\nabla u|^{p-2}\nabla u|^{\frac{p-1}{p}}  \right)^{\frac{p}{p-1}}\left(\int_\Omega |\nabla v|^p \right)^\frac{1}{p} 
        \nonumber \\
        & \leqslant \left( \int_\Omega ||\nabla u_k|^{p-2}\nabla u_k - |\nabla u|^{p-2}\nabla u|^{\frac{p-1}{p}}  \right)^{\frac{p}{p-1}}.
        \end{align}
Now, for all $i=1, \dots, N$, $\partial_i u_k \to \partial_i u$ in $L^{p}(\Omega)$. Since there are a finite amount of $i$, then up to a subsequence, $\partial_i u_k(x) \to \partial_i u(x)$, a.e. $x\in \Omega$. Therefore, $\nabla u_k(x) \to \nabla u(x)$, a.e. $x\in \Omega$. Since the function $H(z):=|z|^{p-2}z$, $z\in \mathbb{R}^N$ is continuous, then \begin{equation}\label{nabla_uv_5}
    ||\nabla u_k(x)|^{p-2}\nabla u_k(x) - |\nabla u(x)|^{p-2}\nabla u(x)| \to 0, \quad \hbox{ as } k\to \infty , \quad \text{a.e. } x\in \Omega .
\end{equation}
On the other hand, a.e. $x\in \Omega$
    \begin{align}\label{nabla_uv_4}
        ||\nabla u_k(x)|^{p-2}\nabla u_k(x) - |\nabla u(x)|^{p-2}\nabla u(x)|^{\frac{p}{p-1}}|     \leqslant C'_p(|w(x)|^p+|\nabla u(x)|^p),
    \end{align}
where $C'_p>0$ satisfies $(a+b)^\frac{p}{p-1} \leqslant C'_p(a^\frac{p}{p-1}+b^\frac{p}{p-1}) $ for all $a,b\geqslant 0$.
Since $|\hat{w}(x)|^p+|\nabla u|^p \in L^1 (\Omega)$, then from \eqref{nabla_uv_4}, \eqref{nabla_uv_5} and by the Lebesgue dominated convergence theorem, 
\begin{equation}\label{nabla_uv_6}
    \lim_{k\to \infty} \int_\Omega ||\nabla u_k|^{p-2}\nabla u_k - |\nabla u|^{p-2}\nabla u|^{\frac{p-1}{p}}   =0.
     \end{equation}
Then, from  \eqref{cont_gate_2} and \eqref{nabla_uv_6}
\begin{equation*}
\lim_{k\to \infty}    \sup_{\|v\|\leqslant 1} |((I_1)'_G(u_k)-(I_1)'_G(u))v| =0.
\end{equation*}
This means that
\begin{equation*}
\lim_{k\to \infty}    \|(I_1)'_G(u_k)-(I_2)'_G(u)\| =0.
\end{equation*}
In consequence $(I_1)'_G$ is continuous. This completes the proof of the Proposition.
\end{proof}

\begin{proposition}\label{prop-cu}
Let $E$ a measurable set in $\mathbb{R}^N$, $c\in L^\infty(E)$ and $p>1$. Consider the functional $I_2:L^{p}(E)\to \mathbb{R}$ defined by
\begin{equation*}
I_2(u)=\frac{1}{p}\int_{E} c(x)|u|^p.
\end{equation*}
Then, $I_2\in C^1(L^{p}(E))$ and 
 \begin{equation*}
I_2'(u)v=\int_{E} c(x)|u|^{p-2} u\cdot v  \qquad \forall  u, v \in L^{p}(E) \,.
\end{equation*}
\end{proposition}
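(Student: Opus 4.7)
The plan is to mirror the two-step strategy used in Proposition \ref{prop-nabla}: first establish Gateaux differentiability with the claimed formula via the mean value theorem and the dominated convergence theorem, then upgrade to Fr\'echet differentiability in $C^1$ by showing continuity of the Gateaux derivative as a map $L^p(E)\to L^{p'}(E) \simeq (L^p(E))^*$, where $p'=p/(p-1)$.

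For Gateaux differentiability, I fix $u,v\in L^p(E)$ and study the difference quotient of $|u+tv|^p/p$ pointwise. By the mean value theorem applied to $t\mapsto |u+tv|^p/p$, for each $x$ with $u(x),v(x)$ finite and each $0<|t|<1$, there exists $\theta(x)\in[0,|t|]$ so that
\begin{equation*}
\left|\frac{|u(x)+tv(x)|^p-|u(x)|^p}{pt}\right|\le |u(x)+\theta(x)v(x)|^{p-1}|v(x)|\le C_p(|u(x)|^{p-1}|v(x)|+|v(x)|^p),
\end{equation*}
with $C_p$ the usual constant from $(a+b)^{p-1}\le C_p(a^{p-1}+b^{p-1})$. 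Multiplying by $c(x)$ and using $c\in L^\infty(E)$, the right-hand side is bounded by $\|c\|_\infty C_p(|u|^{p-1}|v|+|v|^p)$, which is in $L^1(E)$ by H\"older's inequality since $|u|^{p-1}\in L^{p'}(E)$ and $|v|\in L^p(E)$. Hence dominated convergence lets me pass the limit $t\to 0$ inside the integral, yielding
\begin{equation*}
(I_2)_G'(u)v=\int_E c(x)|u|^{p-2}u\,v.
\end{equation*}

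For continuity of $(I_2)_G'$, I take $u_k\to u$ in $L^p(E)$. After passing to a subsequence, $u_k(x)\to u(x)$ a.e.\ and $|u_k(x)|\le w(x)$ for some $w\in L^p(E)$. For $v\in L^p(E)$ with $\|v\|_p\le 1$, H\"older's inequality gives
\begin{equation*}
|((I_2)_G'(u_k)-(I_2)_G'(u))v|\le \|c\|_\infty\Bigl(\int_E \bigl||u_k|^{p-2}u_k-|u|^{p-2}u\bigr|^{p'}\Bigr)^{1/p'}.
\end{equation*}
Continuity of the map $z\mapsto |z|^{p-2}z$ gives pointwise a.e.\ convergence of the integrand to $0$, and the bound
\begin{equation*}
\bigl||u_k|^{p-2}u_k-|u|^{p-2}u\bigr|^{p'}\le C_p'(w(x)^p+|u(x)|^p)\in L^1(E)
\end{equation*}
allows another application of dominated convergence. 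Taking the supremum over $\|v\|_p\le 1$ therefore gives $\|(I_2)_G'(u_k)-(I_2)_G'(u)\|_{(L^p)^*}\to 0$, which by the subsequence principle holds for the whole sequence. This establishes $I_2\in C^1(L^p(E))$.

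There is no real obstacle here; the only point requiring a little care is having $c\in L^\infty(E)$ absorbed at the right place so that all dominating functions land in $L^1(E)$, and confirming the exponent arithmetic $(p-1)p'=p$ used both in the H\"older estimate and in the dominating bound. Unlike Proposition \ref{prop-nabla}, no Sobolev embedding is needed, so the argument is strictly a pointwise/dominated-convergence exercise on $L^p(E)$.
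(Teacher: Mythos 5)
Your proof is correct and follows exactly the route the paper intends: the paper's own proof of Proposition \ref{prop-cu} consists of the single remark that it ``can be done similarly as the proof of Proposition \ref{prop-nabla}'', and you have carried out precisely that adaptation (mean value theorem plus dominated convergence for the Gateaux derivative, then a dominated-convergence argument in $L^{p'}(E)$ for continuity). If anything, your write-up is slightly more careful than the paper's template, since you explicitly invoke the subsequence principle to pass from the extracted subsequence back to the full sequence.
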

\begin{proof} 
The proof can be done similarly as the proof of Proposition \ref{prop-nabla}.
\end{proof}

\begin{proposition}
    \label{RemarkH}
For $\Lambda_q$ defined in \eqref{Lambda_q} we have the following characterization. 
\begin{equation}\label{Lambda_eta}
\Lambda _q =\inf_{w \in \mathcal{C} \setminus \eta } \frac{\|w\|_{c_2,q}^q}{K_q(w)} , 
\end{equation}
where $\eta:=\{w \in W \; |\; K_q(w)=0 \}$.
\end{proposition}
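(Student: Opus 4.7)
The plan is to observe that the two infima are taken over sets that differ only by elements for which the quotient is not well defined (or, equivalently, takes the value $+\infty$), so excluding those elements leaves the value of the infimum unchanged. We proceed in two short steps.

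First, since $K_q(0)=0$, we have $0\in\eta$, and hence
\[
\mathcal{C}\setminus\eta \;\subseteq\; \mathcal{C}\setminus\{0\}.
\]
The infimum on the right-hand side of \eqref{Lambda_eta} is therefore taken over a subset of the feasible set defining $\Lambda_q$. Provided every admissible $w$ on the left (i.e.\ in the definition \eqref{Lambda_q}) contributes a value at least as large as the right-hand side, this inclusion gives $\Lambda_q\le\inf_{w\in\mathcal{C}\setminus\eta}\|w\|_{c_2,q}^q/K_q(w)$.

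Second, for the reverse inequality, use condition (C2): since $m\ge 0$ a.e.\ in $\Omega$ and $\rho\ge 0$ a.e.\ on $\partial\Omega$, we have $K_q(w)\ge 0$ for every $w\in W^{1,q}(\Omega)$, with equality precisely when $w\in\eta$. Hence for any $w\in\mathcal{C}\setminus\{0\}$ the ratio $\|w\|_{c_2,q}^q/K_q(w)$ is finite if and only if $K_q(w)>0$, i.e.\ if and only if $w\in\mathcal{C}\setminus\eta$. Elements of $(\mathcal{C}\setminus\{0\})\cap\eta$ make the denominator vanish while the numerator, being equivalent to the $W^{1,q}$-norm by (C1), is strictly positive; such points contribute the value $+\infty$ and therefore cannot lower the infimum. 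Restricting attention to those $w\in\mathcal{C}\setminus\{0\}$ for which the quotient is well defined (equivalently, $w\in\mathcal{C}\setminus\eta$) does not change the value of $\Lambda_q$, which yields the opposite inequality.

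Combining both directions gives \eqref{Lambda_eta}. The only point needing attention is the equivalence $K_q(w)=0\iff w\in\eta$, which follows at once from (C2); aside from this there is no real obstacle, the statement being essentially a clarification of the implicit domain in the definition \eqref{Lambda_q}.
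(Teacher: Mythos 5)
Your proof is correct and follows essentially the same route as the paper: the inclusion $\mathcal{C}\setminus\eta\subseteq\mathcal{C}\setminus\{0\}$ gives $\Lambda_q\le\inf_{w\in\mathcal{C}\setminus\eta}\|w\|_{c_2,q}^q/K_q(w)$, and the reverse inequality holds because any $w\in(\mathcal{C}\setminus\{0\})\cap\eta$ has $K_q(w)=0$ with $\|w\|_{c_2,q}^q>0$, so it contributes the value $+\infty$ and cannot lower the infimum. The only cosmetic remark is that your ``provided every admissible $w$ contributes a value at least as large\dots'' clause in the first step is unnecessary, since the inclusion of index sets already yields that inequality directly.
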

\begin{proof}
Let $A:=\left\{\frac{\|w\|_{c_2, q}^q}{K_q(w)}\; |\;  w\in \mathcal{C}\setminus \{0\}\right\}$ and $B:=\left\{ \frac{\|w\|_{c_2, q}^q}{K_q(w)}\; |\;  w\in \mathcal{C}\setminus \eta\right\}$. Since $\{0\} \subseteq \eta$, then $B\subseteq A$ and, in consequence, $\Lambda_q \leqslant \inf B$. Now, we shall prove that $\inf B \leqslant \inf A$. It suffices to show that $\inf B$ is a lower bound for $A$. 
 Indeed,  let $ w\in \mathcal{C}\setminus \{0\}$ such that $\frac{\|w\|_{c_2, q}^q}{K_q(w)} \in A$. If $w\in \mathcal{C}\setminus \eta$, then $\frac{\|w\|_{c_2, q}^q}{K_q(w)} \in B$. Therefore $\inf B \leqslant \frac{\|w\|_{c_2, q}^q}{K_q(w)}$. On the other hand, if $w\in \eta$ then $K_q(w)=0$. Furthermore, since $\|w\|_{c_2,q}^q>0$ then $\inf B\leqslant \infty=\frac{\|w\|_{c_2, q}^q}{K_q(w)}$. In any case  $\inf B \leqslant \frac{\|w\|_{c_2, q}^q}{K_q(w)}$, and so  $\inf B\leqslant \frac{\|w\|_{c_2, q}^q}{K_q(w)}$ for all $w\in \mathcal{C}\setminus \eta$. Thus, $\Lambda_q \ge \inf B$,  which completes the proof of \eqref{Lambda_eta}.
\end{proof}
 
    In a similar way,  we can establish the  following identities  using \eqref{Lambda_q2} and \eqref{eigen_q}

\begin{equation}\label{til_Lambda_eta}
\tilde \Lambda _q =\inf_{w \in \mathcal{C} \setminus \eta } \frac{\frac{1}{p}\|w\|_{c_1,p}^p+\frac{1}{q}\|w\|_{c_2,q}^q}{\frac{1}{q}K_q(w)} 
\end{equation}
and 
\begin{equation}\label{lambda_eta}
\lambda _q =\inf_{w \in \mathcal{C}_q \setminus \eta } \frac{\|w\|_{c_2,q}^q}{K_q(w)} . 
\end{equation}

\begin{proposition}\label{lamda_qpositive}
Let $\lambda_q$ as defined in \eqref{lambda_eta}. Then
\begin{equation}\label{lambda_eta_positive}
\lambda _q =\inf_{w \in \mathcal{C}_{1,q} \setminus \eta } \|w\|_{c_2,q}^q >0. 
\end{equation}
\end{proposition}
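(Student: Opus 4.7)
The plan is to prove the two claims of the proposition in sequence: first the identity $\lambda_q = \inf_{w\in \mathcal{C}_{1,q}\setminus\eta}\|w\|_{c_2,q}^q$, and then the strict positivity of this infimum.

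For the identity, I would argue by scaling invariance. Both functionals $w\mapsto \|w\|_{c_2,q}^q$ and $w\mapsto K_q(w)$ are positively $q$-homogeneous, and the defining condition of $\mathcal{C}_q$ (containing a $|w|^{q-2}w$ term) is positively $(q-1)$-homogeneous, hence preserved under multiplication by positive scalars. So given any $w\in \mathcal{C}_q\setminus \eta$, set $\tilde w := w/K_q(w)^{1/q}$; then $\tilde w\in \mathcal{C}_q$ with $K_q(\tilde w)=1$, i.e., $\tilde w\in \mathcal{C}_{1,q}$, and moreover $\tilde w\notin \eta$ since $K_q(\tilde w)=1$. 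The quotient $\|w\|_{c_2,q}^q/K_q(w)$ equals $\|\tilde w\|_{c_2,q}^q$, so starting from \eqref{lambda_eta} the infimum over $\mathcal{C}_q\setminus\eta$ coincides with the infimum over $\mathcal{C}_{1,q}\setminus\eta$. Note in fact $\mathcal{C}_{1,q}\cap \eta=\emptyset$ since on $\mathcal{C}_{1,q}$ we have $K_q=1\ne 0$, so the set is just $\mathcal{C}_{1,q}$.

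For the positivity claim, I would argue by contradiction. Suppose $\lambda_q=0$; then by the established identity there exists a minimizing sequence $(w_n)\subset \mathcal{C}_{1,q}$ with $\|w_n\|_{c_2,q}^q\to 0$. Since condition (C1) makes $\|\cdot\|_{c_2,q}$ equivalent to the standard $W^{1,q}(\Omega)$ norm (as recalled from \cite{BLMG}), it follows that $w_n\to 0$ strongly in $W^{1,q}(\Omega)$. Now $K_q$ is continuous on $W^{1,q}(\Omega)$: the bulk part is controlled by
$$\left|\int_\Omega m(|w_n|^q-|w|^q)\right|\le \|m\|_\infty\big\| |w_n|^q-|w|^q\big\|_{L^1(\Omega)},$$
which goes to $0$ since $w\mapsto |w|^q$ is continuous from $L^q(\Omega)$ to $L^1(\Omega)$, and the boundary part is controlled similarly using the continuity of the trace $W^{1,q}(\Omega)\hookrightarrow L^q(\partial\Omega)$ and the trace inequality \eqref{trace_ineq} together with $\rho\in L^\infty(\partial\Omega)$. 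Therefore $K_q(w_n)\to K_q(0)=0$, contradicting $K_q(w_n)=1$ for all $n$. Hence $\lambda_q>0$.

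The main obstacle is the positivity statement rather than the identity; all the work there lies in justifying that the constraint $K_q=1$ cannot be compatible with $w_n\to 0$ in $W^{1,q}(\Omega)$, which reduces to the norm equivalence granted by (C1) and the continuity of $K_q$ (in particular, of the trace map). Once these ingredients are in place the argument is a short contradiction; the scaling identity is essentially formal.
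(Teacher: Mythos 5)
Your proposal is correct and follows essentially the same route as the paper: the scaling map $w\mapsto w/K_q(w)^{1/q}$ to identify the two infima (the paper phrases this as a two-way inclusion of value sets $A'$ and $B'$), followed by the same contradiction argument for positivity via a minimizing sequence, the norm equivalence from (C1), and the continuity of $K_q$ (which the paper gets from Proposition~\ref{prop-cu} rather than a direct $L^1$ estimate, a negligible difference). No gaps.
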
 
\begin{proof}
    First, let us show that $\lambda _q =\inf_{w \in \mathcal{C}_{1,q} \setminus \eta } \|w\|_{c_2,q}^q.$ Let $A':=\{ \|w\|_{c_2, q}^q\; |\; w\in \mathcal{C}_{1,q}\setminus \eta\}$ and $B':=\left\{ \frac{\|w\|_{c_2, q}^q}{K_q(w)}\; |\;  w\in \mathcal{C}_q\setminus \eta\right\}$. Note that from \eqref{def_C_1r}, $A'\subseteq B'$. Then $\lambda_q=\inf B'\leqslant \inf A'=\inf_{w \in \mathcal{C}_{1,q} \setminus \eta } \|w\|_{c_2,q}^q.$ On the other hand, let $u\in \mathcal{C}_q\setminus \eta$. Then $w:=\frac{u}{\sqrt[q]{K_q(u)}}$  satisfies
\begin{align*}
    K_q(w)& = \frac{1}{K_q(u)} \left(\int_\Omega m(x)|u|^q+\int_{\partial\Omega}\rho |u|^q\right)=1.
\end{align*}
Therefore $w\in \mathcal{C}_{1,q}\setminus \eta$. 
Since $\|\cdot \|_{c_2, q}$ is a norm, then $\frac{\|u\|_{c_2,q}^q}{K_q(u)}=\|w\|_{c_2,q}^q\in A'$. Hence $B'\subseteq A'$ and $\lambda_q=\inf B'\geqslant \inf A'=\inf_{w \in \mathcal{C}_{1,q} \setminus \eta } \|w\|_{c_2,q}^q.$ Thus,  $\lambda _q =\inf_{w \in \mathcal{C}_{1,q} \setminus \eta } \|w\|_{c_2,q}^q $.
\par Observe that  $\lambda_q\geqslant 0.$  Now, let us show that $\lambda_q\ne 0$. Assume by contradiction that $\lambda_q=0$. By the characterization  of infimum in \eqref{lambda_eta_positive}, there exists a minimizing sequence $(w_n)\subseteq \mathcal{C}_{1,q} \setminus \eta$, that is, 
\begin{equation*}\label{limi}
    \lim_{n\to \infty}\|w_n\|_{c_2,q}^q=\lambda_q=0.
\end{equation*}
and $K_q(w_n)=1, \forall n\geqslant 1.$ Therefore, $w_n\to 0$ in $W^{1,q}(\Omega)$. By the continuous  embedding $W^{1,q}(\Omega) $ in $L^q(\Omega)$ and the trace inequality \eqref{trace_ineq}, $w_n\to 0$ in $L^{q}(\Omega)$ and $w_n\to 0$ in $L^{q}(\partial\Omega)$. 
From Proposition \ref{prop-cu} and the fact that $m\in L^\infty (\Omega)$ and $\rho \in L^\infty (\partial\Omega)$, we get 
\begin{equation*}
    \lim_{n\to \infty}      \int_\Omega m(x)|w_{n}|^q=0 \quad
\mbox{ and } \quad
 \lim_{n\to \infty}      \int_{\partial\Omega} \rho(x)|w_{n}|^q=0.
\end{equation*}
Hence,
\begin{equation*}
    \lim_{n\to \infty}K_q(w_{n})=0.
\end{equation*}
This leads a contradiction because $K_q(w_n)=1, \forall n\geqslant 1.$ Thus $\lambda_q>0.$
\end{proof}

\begin{proposition}\label{remark-lambda>0}
If $\lambda$ is an eigenvalue of problem \eqref{eig-prob_1} and $u_\lambda$ is a corresponding eigenfunction, then $\lambda>0$ and   $\displaystyle\int_\Omega m(x) |u_\lambda|^q + \int_{\partial\Omega} \rho(x) |u_\lambda|^q >0$.
\end{proposition}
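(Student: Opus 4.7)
The strategy is to test the eigenvalue identity against the eigenfunction itself and read off both claims from the resulting scalar equation. This is a natural ``Rayleigh-quotient'' type manipulation, and everything needed is already in place from the norm equivalence discussed after the $\|\cdot\|_{c_i,\theta}$ definition and from hypothesis (C2).

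Concretely, I would take $w = u_\lambda$ in \eqref{def-eigen-1}. The gradient terms collapse to $\int_\Omega |\nabla u_\lambda|^q + \int_\Omega |\nabla u_\lambda|^p$, the zeroth-order terms become $\int_\Omega c_1|u_\lambda|^p + \int_\Omega c_2|u_\lambda|^q$, and the right-hand side becomes $\lambda\,K_q(u_\lambda)$. Grouping the left-hand side using the notation $\|\cdot\|_{c_i,\theta}$ gives
\begin{equation*}
\|u_\lambda\|_{c_1,p}^p + \|u_\lambda\|_{c_2,q}^q = \lambda\, K_q(u_\lambda).
\end{equation*}

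Next I would argue the left-hand side is strictly positive. By (C1) the functionals $\|\cdot\|_{c_1,p}$ and $\|\cdot\|_{c_2,q}$ are equivalent to the standard norms on $W^{1,p}(\Omega)$ and $W^{1,q}(\Omega)$ respectively (as recalled in the preliminary section, citing \cite{BLMG}), so they vanish only at $0$. Since $u_\lambda \neq 0$ and since $u_\lambda \in W = W^{1,r}(\Omega) \subseteq W^{1,p}(\Omega) \cap W^{1,q}(\Omega)$ (because $\Omega$ is bounded and $r = \max\{p,q\}$), both summands on the left are strictly positive, hence their sum is strictly positive.

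Finally, (C2) guarantees $K_q(u_\lambda) \geq 0$. If $K_q(u_\lambda) = 0$ then the right-hand side of the displayed equation is $0$, contradicting the positivity just established; so $K_q(u_\lambda) > 0$, which is the second conclusion. Dividing then yields $\lambda = (\|u_\lambda\|_{c_1,p}^p + \|u_\lambda\|_{c_2,q}^q)/K_q(u_\lambda) > 0$, giving the first conclusion. There is really no serious obstacle here: the only point that requires a sentence of care is justifying that the $c_i$-weighted norms detect nontriviality of $u_\lambda$, which is exactly what the equivalence of norms from (C1) provides.
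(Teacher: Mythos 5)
Your proof is correct and follows essentially the same route as the paper: test \eqref{def-eigen-1} with $w=u_\lambda$ to obtain $\|u_\lambda\|_{c_1,p}^p+\|u_\lambda\|_{c_2,q}^q=\lambda K_q(u_\lambda)$, note the left side is strictly positive since $u_\lambda\neq 0$, and conclude both claims. You simply spell out in more detail (via the norm equivalence from (C1) and the sign condition $K_q\geq 0$ from (C2)) what the paper leaves implicit.
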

\begin{proof}
Taking $w=u_\lambda$ as a test function in \eqref{def-eigen-1}, we have
\begin{equation}\label{eigenfunction}
\|u_\lambda\|_{c_1,p}^p+    \|u_\lambda\|_{c_2,q}^q=\lambda \left( \int_\Omega m(x) |u_\lambda|^q + \int_{\partial\Omega} \rho(x) |u_\lambda|^q \right).
\end{equation}
Since $u_\lambda \neq 0$ as an eigenfunction, the left hand side of \eqref{eigenfunction} is positive. Thus, $\lambda >0$ and  $\displaystyle\int_\Omega m(x) |u_\lambda|^q + \int_{\partial\Omega} \rho(x) |u_\lambda|^q >0$.
\end{proof}

\section{The spectrum  problems for $(p,q)-$Laplacian}
In this section we will give a complete description of the spectrum of problem \eqref{eig-prob_1}  when $c_1$, $ c_2,$ and the weights $ m$ and $\rho$ satisfy conditions (C1) and (C2).

\begin{lem}\label{0<lambda_q}
suppose that conditions (C1) and (C2) are satisfied. Then 
\begin{enumerate}
    \item[(i)]  $0<\lambda_q\leqslant \Lambda_q$.
\item[(ii)]  $\Lambda_q =\Tilde{\Lambda}_q$. 
\item[(iii)] there are no eigenvalues of problem \eqref{eig-prob_1} on $(-\infty, \Lambda_q]$,    
\end{enumerate}
where $\Lambda_q$, $\tilde{\Lambda}_q$ and $\lambda_q$ are defined in \eqref{Lambda_q}, \eqref{Lambda_q2} and \eqref{eigen_q} respectively.
\end{lem}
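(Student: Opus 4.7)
My plan is to handle the three items in turn, drawing on the variational characterizations just proved in Propositions~\ref{RemarkH} and \ref{lamda_qpositive}. Part (i) is essentially bookkeeping: $\lambda_q>0$ is Proposition~\ref{lamda_qpositive}, and $\lambda_q\leq \Lambda_q$ is the observation already recorded after \eqref{eigen_q} that boundedness of $\Omega$ forces $W\subseteq W^{1,q}(\Omega)$, hence $\mathcal{C}\subseteq \mathcal{C}_q$, so the infimum \eqref{Lambda_q} is taken over a smaller set than the one defining $\lambda_q$.

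For (ii), I will establish $\Lambda_q=\tilde{\Lambda}_q$ by a two-sided inequality. The direction $\tilde{\Lambda}_q\geq \Lambda_q$ is immediate upon rewriting the $\tilde{\Lambda}_q$-quotient as $\frac{q}{p}\frac{\|w\|_{c_1,p}^p}{K_q(w)}+\frac{\|w\|_{c_2,q}^q}{K_q(w)}$ and using $\|w\|_{c_1,p}^p\geq 0$, followed by infimizing over $\mathcal{C}\setminus\eta$ as in \eqref{Lambda_eta} and \eqref{til_Lambda_eta}. For the reverse, I will exploit the positive homogeneity of $\mathcal{C}\setminus\eta$: for any $w\in \mathcal{C}\setminus\eta$ and any $t>0$, $tw$ is again in $\mathcal{C}\setminus\eta$ (the defining identity of $\mathcal{C}$ is $(q-1)$-homogeneous, and $K_q(tw)=t^qK_q(w)>0$), whence
\[
\tilde{\Lambda}_q\ \leq\ \frac{q}{p}\,t^{\,p-q}\,\frac{\|w\|_{c_1,p}^p}{K_q(w)}+\frac{\|w\|_{c_2,q}^q}{K_q(w)}.
\]
Letting $t\to 0^+$ when $p>q$, or $t\to\infty$ when $p<q$, sends $t^{p-q}\to 0$ and kills the first summand; taking the infimum in $w$ then yields $\tilde{\Lambda}_q\leq \Lambda_q$.

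For (iii), I will argue by contradiction. Assume $\lambda\leq \Lambda_q$ is an eigenvalue with eigenfunction $u_\lambda$. Proposition~\ref{remark-lambda>0} gives $\lambda>0$ and $K_q(u_\lambda)>0$, so $u_\lambda\notin \eta$. Testing the weak eigenvalue equation with $w=u_\lambda$ produces
\[
\|u_\lambda\|_{c_1,p}^p+\|u_\lambda\|_{c_2,q}^q\ =\ \lambda K_q(u_\lambda),
\]
which rearranges to $\frac{\|u_\lambda\|_{c_2,q}^q}{K_q(u_\lambda)}=\lambda-\frac{\|u_\lambda\|_{c_1,p}^p}{K_q(u_\lambda)}<\lambda$, the strict inequality using that $\|\cdot\|_{c_1,p}$ is equivalent to the $W^{1,p}$ norm by (C1) and that $u_\lambda\neq 0$. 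If $u_\lambda$ already lies in $\mathcal{C}\setminus\eta$, the definition of $\Lambda_q$ instantly gives $\Lambda_q\leq \frac{\|u_\lambda\|_{c_2,q}^q}{K_q(u_\lambda)}<\lambda\leq \Lambda_q$, a contradiction.

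The hard part will be that eigenfunctions need not lie in $\mathcal{C}$ — for example, a sign-definite eigenfunction cannot, since then $\int_\Omega m|u_\lambda|^{q-2}u_\lambda+\int_{\partial\Omega}\rho|u_\lambda|^{q-2}u_\lambda$ is strictly of one sign. I expect to handle this by reducing to $\mathcal{C}$ through an additive shift: the continuous map
\[
\alpha\ \longmapsto\ \int_\Omega m\,|u_\lambda-\alpha|^{q-2}(u_\lambda-\alpha)+\int_{\partial\Omega}\rho\,|u_\lambda-\alpha|^{q-2}(u_\lambda-\alpha)
\]
tends to $\mp\infty$ as $\alpha\to\pm\infty$ (by (C2) and the fact that $\int_\Omega m+\int_{\partial\Omega}\rho>0$), so the intermediate value theorem provides some $\alpha_0\in\mathbb{R}$ with $v:=u_\lambda-\alpha_0\in\mathcal{C}$. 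Since $\nabla v=\nabla u_\lambda$ is unaffected by the shift, the plan is to plug $v$ into the characterization of $\Lambda_q$ from Proposition~\ref{RemarkH}, track how the $c_2$, $m$, $\rho$ integrals transform under $u_\lambda\mapsto u_\lambda-\alpha_0$, and combine with the displayed identity above to extract the strict inequality $\Lambda_q<\lambda$, contradicting $\lambda\leq \Lambda_q$. This reduction from an arbitrary eigenfunction to a member of $\mathcal{C}$ via a constant shift is the technical core of the lemma.
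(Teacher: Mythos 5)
Parts (i) and (ii) of your proposal are correct and follow the paper's proof essentially verbatim: (i) combines Proposition~\ref{lamda_qpositive} with the inclusion $\mathcal{C}\subseteq\mathcal{C}_q$, and (ii) uses the trivial inequality in one direction and the scaling $w\mapsto tw$ with $t^{p-q}\to 0$ in the other, exactly as in the paper.

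Part (iii) is where you have a genuine gap. Up to the identity $\|u_\lambda\|_{c_1,p}^p+\|u_\lambda\|_{c_2,q}^q=\lambda K_q(u_\lambda)$ you track the paper, but the argument hinges on the bound $\Lambda_q\leqslant \|u_\lambda\|_{c_2,q}^q/K_q(u_\lambda)$, and you correctly observe that this is only licensed by the definition \eqref{Lambda_q} when $u_\lambda\in\mathcal{C}$ --- which is not automatic, since testing \eqref{def-eigen-1} with $w\equiv 1$ yields $\int_\Omega (c_1|u_\lambda|^{p-2}+c_2|u_\lambda|^{q-2})u_\lambda=\lambda\bigl(\int_\Omega m|u_\lambda|^{q-2}u_\lambda+\int_{\partial\Omega}\rho|u_\lambda|^{q-2}u_\lambda\bigr)$ rather than membership in $\mathcal{C}$. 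Your proposed repair, shifting to $v=u_\lambda-\alpha_0\in\mathcal{C}$ via the intermediate value theorem, is only a sketch, and as described it cannot close the argument: the shift leaves $\nabla u_\lambda$ alone but changes $\int_\Omega c_2|u_\lambda|^q$, $K_q(u_\lambda)$, and $\int_\Omega c_1|u_\lambda|^p$ in independent, non-monotone ways, so the Rayleigh quotient of $v$ has no controllable relation to that of $u_\lambda$; moreover the eigenvalue identity you would need to ``combine with'' is an identity for $u_\lambda$, not for $v$. Nothing in the proposal actually extracts $\Lambda_q<\lambda$ from these two unrelated pieces of information, so the ``technical core'' you announce is not supplied. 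For comparison, the paper's own proof simply applies the infimum bound directly to $u_\lambda$ without discussing membership in $\mathcal{C}$ (i.e.\ it implicitly reads $\Lambda_q$ as an infimum over all of $W\setminus\eta$, consistent with how $\Lambda_q$ and $\tilde\Lambda_q$ are later used for arbitrary $u\in W$ in \eqref{Ju_upperbound}), and then concludes $\|u_\lambda\|_{c_1,p}^p\leqslant 0$, hence $u_\lambda=0$. So you have put your finger on a real subtlety, but you have not resolved it; to complete your version you would need either to prove that eigenfunctions lie in $\mathcal{C}$, or to prove that the infimum in \eqref{Lambda_q} is unchanged when taken over all of $W\setminus\eta$, neither of which the shift argument delivers.
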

\begin{proof}
\begin{enumerate}
    \item[(i)]  By Proposition \ref{lamda_qpositive} $\lambda_q >0$.  On the other hand, since $\mathcal{C}\subseteq \mathcal{C}_q$ then $\Lambda_q \geqslant \lambda_q$, which also yields to $\Lambda_q >0$.
\item[(ii)] Let us show that $\Lambda_q=\Tilde{\Lambda}_q.$ From the definitions of $\Lambda_q$ and $\Tilde{\Lambda}_q$ in \eqref{Lambda_q} and \eqref{Lambda_q2}, we clearly have  that  $\Lambda_q\leqslant \Tilde{\Lambda}_q$. Therefore, it is enough to prove that $\Lambda_q\geqslant\Tilde{\Lambda}_q$. Let $v\in \mathcal{C} \setminus \{0\}$ and $t>0$. Then $tv\in \mathcal{C}\setminus \{0\}$. Hence,
\begin{equation}\label{lambda_hat_1}
\begin{aligned}
    \tilde{\Lambda}_q \leqslant \frac{\dfrac{t^q}{q}\|v\|_{c_2,q}^q+\dfrac{t^p}{p}\|v\|_{c_1,p}^p}{\dfrac{t^q}{q} K_q(v)}   = \dfrac{\|v\|_{c_2,q}^q }{K_q(v)} + \dfrac{qt^{p-q}}{p}\frac{\|v\|_{c_1,p}^p}{K_q(v)}.
   \end{aligned}
\end{equation}
If $q>p$ and taking limit in \eqref{lambda_hat_1} as $t\to \infty$, we have that $\dfrac{qt^{p-q}}{p}\frac{\|v\|_{c_1,p}^p}{K_q(v)}\to 0$. If $q<p$ and taking the limit as $t\to 0^+$, we also have $\dfrac{qt^{p-q}}{p}\frac{\|v\|_{c_1,p}^p}{K_q(v)}\to 0$. In either case we have that for any $v\in W\setminus \{0\}$
\begin{equation}\label{destilde}
\tilde{\Lambda}_q \leqslant \frac{\|v\|_{c_2,q}^q}{K_q(v)}.
\end{equation}
Taking the infimum in \eqref{destilde}, we have  $\tilde{\Lambda}_q \leqslant \Lambda_q$. Thus $\tilde{\Lambda}_q = \Lambda_q$.
\item[(iii)]
In view of Proposition \ref{remark-lambda>0}, any $\lambda\leqslant 0$ is not an eigenvalue of \eqref{eig-prob_1}. 
Now, we show that if $\lambda \in (0, \Lambda_q]$ then $\lambda$ is not an eigenvalue of \eqref{eig-prob_1}.  Arguing by contradiction let us assume that there exists $0<\lambda \leqslant \Lambda_q$ such that $\lambda$ is an eigenfunction of \eqref{eig-prob_1}. Then there exists an eigenfunction $u_\lambda$, which in particular satisfies \eqref{def-eigen-1} with $w=u_\lambda$, producing
\begin{equation*}\label{eigen-3}
\|u_\lambda\|_{c_1,p}^p+\|u_\lambda\|_{c_2,q}^q=\lambda K_q(u_\lambda).
 \end{equation*}
 Therefore 
\begin{equation}\label{eigen-4}
\frac{\|u_{\lambda}\|_{c_2,q}^q+\|u_{\lambda}\|_{c_1,p}^p}{K_q(u_\lambda)} =\lambda \leqslant  
\Lambda_q\leqslant \frac{\|u_\lambda \|_{c_2,q}^q}{K_q(u_\lambda )}. 
\end{equation}
From \eqref{eigen-4}, we get that $ \|u_{\lambda}\|_{c_1,p}^p\leqslant 0.$
Hence, $\|u_{\lambda}\|_{c_1,p}^p=0$ which implies that $u_{\lambda}=0$ a.e. in $\Omega$.  This leads to a contradiction to the fact that $u_{\lambda}$ is an eigenfunction of \eqref{eig-prob_1}. This completes the proof.
\end{enumerate}
\end{proof}

Now, consider the the functional $J_\lambda : W\to \mathbb{R}$ defined by
\begin{equation}\label{def_J_lamb}
    J_\lambda (u)=\frac{1}{p}\|u\|_{c_1,p}^p+\frac{1}{q}\|u\|_{c_2,q}^q-\frac{\lambda}{q}\left(\int_{\Omega} m(x) |u|^q+\int_{\partial\Omega} \rho(x)|u|^q\right).
\end{equation}
From Propositions \ref{prop-nabla} and \ref{prop-cu} we have that for any $\lambda$, $J_\lambda$ is a $C^1$ functional and its derivative is given by
\begin{eqnarray*}\label{deriv_J}
    \langle J'_{\lambda} u, w \rangle &=&\int_\Omega \left(|\nabla u|^{p-2}\nabla u \cdot \nabla w + c_1(x)|u|^{p-2}u w + |\nabla u|^{q-2}\nabla u \cdot \nabla w +c_2(x)|u|^{q-2}u w \right) \nonumber \\
    &-& \lambda \left(\int_{\Omega} m(x) |u|^{q-2}uw+\int_{\partial\Omega} \rho(x)|u|^{q-2}uw\right) ,
\end{eqnarray*}
for all $w\in W$.
Therefore, according to the Definition \ref{def_eig}, the eigenvalues of problem \eqref{eig-prob_1}
 are precisely the values of  $\lambda$ such that $J_\lambda$ has at least one nontrivial critical point. \\
Observe that the trace inequality \eqref{trace_ineq} and the definition of $K_q$ \eqref{def-K_q} give the following estimate 
    \begin{equation}\label{bound-K_q}
        K_q(w) \leqslant \|m\|_\infty \|w\|_q^q+ \|\rho \|_\infty \|w\|_{\partial\Omega, q}^q \leqslant C_1 \|w\|_q^q, \mbox{ for all } w\in W,
    \end{equation}
    where $C_1:=\|m\|_\infty+ \|\rho \|_\infty C_b^q$.  \begin{lem}\label{J_lambda-coer}
 If $q<p$, then
 for any $\lambda>0$, $J_\lambda$ is coercive.
\end{lem}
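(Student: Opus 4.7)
The plan is to exploit the hypothesis $q<p$ to show that the leading positive term $\frac{1}{p}\|u\|_{c_1,p}^p$ dominates the (possibly) negative $L^q$-type term in $J_\lambda$, after controlling the latter by the $W^{1,p}$-norm of $u$.

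Since $q<p$, we have $r=\max\{p,q\}=p$, so the working space is $W=W^{1,p}(\Omega)$, and by condition (C1) the norm $\|\cdot\|_{c_1,p}$ is equivalent to the standard $W^{1,p}$-norm. First I would drop the non-negative term $\frac{1}{q}\|u\|_{c_2,q}^q$ in \eqref{def_J_lamb} to obtain the basic estimate
\begin{equation*}
J_\lambda(u)\;\geq\;\frac{1}{p}\|u\|_{c_1,p}^p-\frac{\lambda}{q}K_q(u).
\end{equation*}
Next, starting from \eqref{bound-K_q}, I would bound each of $\|u\|_q^q$ and $\|u\|_{\partial\Omega,q}^q$ by $\|u\|_{c_1,p}^q$. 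For the interior term, because $\Omega$ is bounded and $q<p$, the continuous embedding $L^p(\Omega)\hookrightarrow L^q(\Omega)$ gives $\|u\|_q\leq C\|u\|_p\leq C\|u\|_{c_1,p}$. For the boundary term, the trace inequality \eqref{trace_ineq} applied with $\theta=p$ is valid for $\beta=q$, since $q<p\leq\frac{(N-1)p}{N-p}$ when $p<N$, and for any $\beta\geq 1$ when $p\geq N$; hence $\|u\|_{\partial\Omega,q}\leq C_b\|u\|_{c_1,p}$. Combining these yields a constant $C>0$ (depending only on $\Omega$, $p$, $q$, $m$, $\rho$, $C_b$) such that
\begin{equation*}
K_q(u)\;\leq\;C\,\|u\|_{c_1,p}^{\,q}\qquad\text{for all }u\in W.
\end{equation*}

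Inserting this into the estimate above gives
\begin{equation*}
J_\lambda(u)\;\geq\;\frac{1}{p}\|u\|_{c_1,p}^{\,p}-\frac{\lambda C}{q}\|u\|_{c_1,p}^{\,q}.
\end{equation*}
Since $p>q$, the right-hand side is a function of $\|u\|_{c_1,p}$ of the form $\alpha t^p-\beta t^q$ with $\alpha,\beta>0$, which tends to $+\infty$ as $t\to\infty$. Therefore $J_\lambda(u)\to+\infty$ as $\|u\|_{c_1,p}\to\infty$, proving coercivity.

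There is no real obstacle here beyond bookkeeping; the only subtlety is making sure the trace inequality \eqref{trace_ineq} is applied with a valid pair $(\theta,\beta)=(p,q)$, which is where the assumption $q<p$ (together with the cases $p<N$ and $p\geq N$) is used. The positivity of $\lambda$ plays no real role in the argument; what matters is that the negative contribution grows only like $\|u\|_{c_1,p}^q$ while the positive one grows like $\|u\|_{c_1,p}^p$.
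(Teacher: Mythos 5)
Your proof is correct, but it takes a more direct route than the paper. The paper argues by contradiction: it assumes a sequence $(u_n)$ with $\|u_n\|_{c_1,p}\to\infty$ and $J_\lambda(u_n)\leq C$, uses \eqref{bound-K_q} to get $\frac1p\|u_n\|_{c_1,p}^p\leq \frac{\lambda C_1}{q}\|u_n\|_q^q+C$, normalizes $v_n=u_n/\|u_n\|_q$, divides by $\|u_n\|_q^{p}$, and uses $p-q>0$ to force $\|v_n\|_{c_1,p}\to 0$, contradicting $\|v_n\|_q=1$ via the embedding $W^{1,p}(\Omega)\hookrightarrow L^p(\Omega)\hookrightarrow L^q(\Omega)$. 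You instead prove the explicit lower bound $J_\lambda(u)\geq \frac1p\|u\|_{c_1,p}^p-\frac{\lambda C}{q}\|u\|_{c_1,p}^q$ by controlling $K_q(u)$ directly through the embedding $L^p(\Omega)\hookrightarrow L^q(\Omega)$ and the trace inequality \eqref{trace_ineq} with $(\theta,\beta)=(p,q)$ (your verification that this pair is admissible, $q<p\leq\frac{(N-1)p}{N-p}$ when $p<N$, is right), and then conclude by the elementary fact that $\alpha t^p-\beta t^q\to\infty$ when $p>q$. The same ingredients (equivalence of $\|\cdot\|_{c_1,p}$ with the $W^{1,p}$-norm, $q<p$, boundedness of $\Omega$) appear in both arguments, but your version is quantitative, avoids the subsequence extraction, and, as you observe, shows the conclusion holds for every $\lambda\in\mathbb{R}$, not only $\lambda>0$. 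A side benefit of your estimate of the boundary term directly by $C_b\|u\|_{c_1,p}$ is that it bypasses the second inequality in \eqref{bound-K_q}, which as written bounds $\|w\|_{\partial\Omega,q}^q$ by a multiple of $\|w\|_q^q$ rather than by a Sobolev norm; your handling is the safer one.
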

\begin{proof}
    Observe that $W=W^{1,p}(\Omega)$.
    Arguing by contradiction,  suppose that there exist a constant $C>0$ and a sequence $(u_n)$ in $W$ such that $\|u_n\|_{c_1,p} \to \infty $ as $n\to \infty$ and $J_\lambda (u_n)\leqslant C$. Then,  from \eqref{def_J_lamb} and \eqref{bound-K_q} we see that
    \begin{align}\label{coer_1}
 0  \leqslant \frac{1}{p}\|u_n\|_{c_1,p}^p    \leqslant -\frac{1}{q}\|u_n\|_{c_2,q}^q+ \frac{\lambda}{q}K_q(u_n) +C 
 \leqslant  \frac{\lambda C_1}{q} \|u_n\|_q^q+C. 
    \end{align}
Since $\|u_n\|_{c_1,p}\to \infty$, then from \eqref{coer_1} $\|u_n\|_{q}\to \infty$. Set $v_n:=\frac{u_n}{\|u_n\|_q}$. Dividing  \eqref{coer_1} by $\|u_n\|_q^p$ we have
   \begin{equation}\label{coer_2}
 0\leqslant \frac{1}{p}\|v_n\|_{c_1,p}^p  \leqslant  \frac{\lambda C_1}{q\|u_n\|_q^{p-q}} +\frac{C}{\|u_n\|_q^{p}}. 
    \end{equation}
Noting $p-q>0$, then the right hand side of  \eqref{coer_2} goes to zero as $n\to \infty$. Consequently $\lim_{n\to \infty}\|v_n\|_{c_1,p}=0$ and hence $v_n\to 0$ in $W^{1,p}(\Omega)$.
Now,  using the continuous embedding of $L^p(\Omega) \hookrightarrow L^q(\Omega)$ and $W^{1,p}(\Omega) \hookrightarrow L^p(\Omega)$, we have
\begin{equation*}
    1=\|v_n\|_q \leqslant C_1\|v_n\|_p \leqslant C_2 \|v_n\|_{c_1,p}\to 0,
\end{equation*}
which is a contraction. Thus $J_{\lambda}$ is coercive for  $q<p$.
\end{proof}

\begin{lem}\label{1-exist-eigen}
    If $q<p$, then every $\lambda >\Lambda_q$ is an eigenvalue of the problem \eqref{eig-prob_1}.
\end{lem}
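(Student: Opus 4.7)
The plan is to apply the direct method of the calculus of variations to the $C^1$ functional $J_\lambda$ defined in \eqref{def_J_lamb}, since by the remark following that definition the nontrivial critical points of $J_\lambda$ are precisely the eigenfunctions associated with $\lambda$ in \eqref{eig-prob_1}. Fix $\lambda>\Lambda_q$. By Lemma \ref{J_lambda-coer}, $J_\lambda$ is coercive on $W=W^{1,p}(\Omega)$ (recall $r=p$ when $q<p$), so it suffices to show that $J_\lambda$ is sequentially weakly lower semicontinuous and attains a strictly negative infimum.

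For weak lower semicontinuity, I would note that $u\mapsto \|u\|_{c_1,p}^p$ and $u\mapsto \|u\|_{c_2,q}^q$ are continuous convex functionals (the first on $W$ and the second on $W$ via the continuous inclusion $W\hookrightarrow W^{1,q}(\Omega)$ valid since $q<p$), hence weakly lower semicontinuous. The remaining term $(\lambda/q)K_q(u)$ is actually weakly continuous: by Rellich--Kondrachov (see the embedding in the preliminaries) $W\hookrightarrow L^q(\Omega)$ is compact, and the trace operator $W\to L^q(\partial\Omega)$ is compact as well, so if $u_n\rightharpoonup u$ in $W$, then $u_n\to u$ in $L^q(\Omega)$ and in $L^q(\partial\Omega)$; hence Proposition \ref{prop-cu} (and its analog on $\partial\Omega$) applied with the bounded weights $m$ and $\rho$ yields $K_q(u_n)\to K_q(u)$. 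Combining, $J_\lambda$ is sequentially weakly lower semicontinuous; together with coercivity and reflexivity of $W$, this produces a minimizer $u_\lambda\in W$.

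The decisive step — and the main obstacle — is showing $\inf_W J_\lambda<0$, which forces $u_\lambda\ne 0$. Here I would exploit the hypothesis $q<p$ together with the characterization \eqref{Lambda_eta} of $\Lambda_q$: since $\lambda>\Lambda_q$, there exists $v\in\mathcal{C}\setminus\eta$ with $\|v\|_{c_2,q}^q<\lambda K_q(v)$. For $t>0$ we compute
\begin{equation*}
J_\lambda(tv)=\frac{t^p}{p}\|v\|_{c_1,p}^p+\frac{t^q}{q}\bigl(\|v\|_{c_2,q}^q-\lambda K_q(v)\bigr)=\frac{t^q}{q}\bigl(\|v\|_{c_2,q}^q-\lambda K_q(v)\bigr)+O(t^p).
\end{equation*}
Because $p>q$, the second term is of higher order as $t\to 0^+$, so the bracketed coefficient being negative implies $J_\lambda(tv)<0$ for all sufficiently small $t>0$. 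Consequently the minimizer $u_\lambda$ satisfies $J_\lambda(u_\lambda)\le J_\lambda(tv)<0=J_\lambda(0)$, so $u_\lambda\ne 0$.

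Finally, since $u_\lambda$ is a minimizer of the $C^1$ functional $J_\lambda$ on the open set $W$, it satisfies $J_\lambda'(u_\lambda)=0$; by the explicit formula for $J_\lambda'$ recorded before Lemma \ref{J_lambda-coer}, this is exactly the weak formulation \eqref{def-eigen-1}. Thus $\lambda$ is an eigenvalue of \eqref{eig-prob_1} with eigenfunction $u_\lambda$, completing the argument. The only subtlety I expect to require care is the scaling step: one must verify that an element of $\mathcal{C}\setminus\eta$ (rather than a general element of $W$) achieves the quotient below $\lambda$, which is precisely what \eqref{Lambda_eta} provides, and that the $L^p$-type term truly becomes negligible against the $L^q$-type term in the limit $t\to 0^+$, which is where the hypothesis $q<p$ is essential.
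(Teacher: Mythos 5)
Your proof is correct and follows essentially the same route as the paper: coercivity from Lemma \ref{J_lambda-coer}, weak lower semicontinuity of $J_\lambda$, the direct method to obtain a global minimizer, and a test function with negative energy to rule out the trivial minimizer. The only cosmetic difference is that you produce the negative-energy test function by scaling $tv$ as $t\to 0^+$, which is precisely the computation the paper packages separately as the identity $\Lambda_q=\tilde{\Lambda}_q$ in Lemma \ref{0<lambda_q}(ii) before invoking \eqref{til_Lambda_eta}.
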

\begin{proof}
We first prove that for any $\lambda > \Lambda_q$, $J_\lambda$ is weakly lower semicontinuous. Let $(u_n)$ be a sequence such that $u_n \rightharpoonup u$ in $W$.
We want to see that $  \displaystyle{  J_\lambda (u) \leqslant \liminf_{n\to \infty} J_\lambda (u_n)}$. 
Arguing by contradiction, let us assume that 
\begin{equation}\label{liminf<norm-1}
 \liminf_{n\to \infty} \left(\frac{1}p \|u_n\|_{c_1,p}^p + \frac{1}q \|u_n\|_{c_2,q}^q - \frac{\lambda}q K_{q}(u_n) \right) <   \frac{1}p \|u\|_{c_1,p}^p+\frac{1}q \|u\|_{c_2,q}^q- \frac{\lambda}q K_{q}(u).
\end{equation}
Since $q<p$, then  $W\subseteq W^{1,q}(\Omega)$ and therefore, $u_n \rightharpoonup u$ in $W^{1,q}(\Omega)$. Using the fact that the norms in Banach spaces  are weakly lower semicontinuous and properties of $\liminf$, we get
\begin{eqnarray}\label{norm_leq_liminf}
   \frac{1}p \|u\|_{c_1,p}^p+\frac{1}q \|u\|_{c_2,q}^q 
   &\leqslant & \liminf_{n\to \infty} \left(\frac{1}p \|u_n\|_{c_1,p}^p + \frac{1}q \|u_n\|_{c_2,q}^q \right).
\end{eqnarray}
From \eqref{liminf<norm-1} and the characterization of $\liminf$ as the infimum of the set of subsequential limits, we have that for some subsequence 
\begin{equation}\label{liminf<norm}
 \lim_{k\to \infty} \left(\frac{1}p \|u_{n_k}\|_{c_1,p}^p + \frac{1}q \|u_{n_k}\|_{c_2,q}^q - \frac{\lambda}q K_{q}(u_{n_k}) \right) <   \frac{1}p \|u\|_{c_1,p}^p+\frac{1}q \|u\|_{c_2,q}^q- \frac{\lambda}q K_{q}(u) .
\end{equation}
Since $u_{n_k} \rightharpoonup u$ in $W^{1,q}(\Omega)$ then $(u_{n_k})$ is bounded in $W^{1,q}(\Omega)$. By the compact embedding of $W^{1,q}(\Omega)$ in  $L^q(\Omega)$ we have that there is a  subsequence  (which without loss of generality we relabel $(u_{n_k})$) such  that $u_{n_k}\to u$ in $L^q(\Omega)$. 
Furthermore, from \eqref{trace_ineq} we get $u_{n_k}\to u$ in $L^{q}(\partial\Omega)$. By proposition \ref{prop-cu}  with $E=\Omega$ or $E=\partial\Omega$ and $c(x)=m(x)$ or $c(x)=\rho(x)$, we have that $K_q$ is continuous. Hence,

\begin{equation}\label{K_q-u_n}
    \lim_{k\to \infty}K_q(u_{n_k})=K_q(u).
\end{equation}
From \eqref{liminf<norm}  and \eqref{K_q-u_n} 
\begin{equation*}
\lim_{k\to \infty} \left(\frac{1}p \|u_{n_k}\|_{c_1,p}^p + \frac{1}q \|u_{n_k}\|_{c_2,q}^q \right) <   \frac{1}p \|u\|_{c_1,p}^p+\frac{1}q \|u\|_{c_2,q}^q,
\end{equation*}
which contradicts \eqref{norm_leq_liminf}. Therefore $J_\lambda$ is weakly lower semicontinuous. 

Now, we will prove that any $\lambda > \Lambda_q$ is an eigenvalue of Problem \eqref{eig-prob_1}. Since $\lambda > \Lambda_q$, then by assertion (i) of  Lemma \ref{0<lambda_q}, $\lambda>0$. It follows from Lemma \ref{J_lambda-coer} that  $J_\lambda$ is coercive. By \cite[Theorem 1.6 and Remark 1.5.7]{Badiale-Serra_2010}, $J_\lambda $ has a global minimizer $w_* \in W$, that is, 
\begin{equation*}
    J_\lambda (w_*) =\min_{w\in W} J_\lambda (w).
\end{equation*}
Now let us prove that $w_*\neq 0$. Let $\lambda >\tilde{\Lambda}_q$, where $\tilde{\Lambda}_q$ is as  defined in \eqref{Lambda_q}. Then, from \eqref{til_Lambda_eta}, there exists $w_\lambda \neq 0$ such that 
\begin{equation*}
    \frac{\frac{1}{p}\|w_\lambda \|_{c_1,p}^p+\frac{1}{q}\|w_\lambda \|_{c_2,q}^q}{\frac{1}{q}K_q(w_\lambda)}<\lambda .
\end{equation*}
Hence,
\begin{equation*}
    \frac{1}{p}\|w_\lambda \|_{c_1,p}^p+\frac{1}{q}\|w_\lambda \|_{c_2,q}^q< \frac{\lambda}{q}K_q(w_\lambda).
\end{equation*}
This implies that
\begin{equation*}
 J_\lambda (w_*)\leqslant   J_\lambda (w_\lambda) <0.
\end{equation*}
So, $w_* \neq 0$. Thus $\lambda$ is an eigenvalue of \eqref{eig-prob_1}.
\end{proof}

In our approach to the statement of the coercivity and weakly lower semicontinuity of $J_\lambda$ the condition  $q<p$ was essential. However, when $q>p$, this procedure does not apply. To overcome this situation, we shall consider the restriction of $J_{\lambda}$ to the Nehari manifold  defined below and prove that for any $\lambda >\Lambda_q$, this functional attains its minimum.

Assume that  $q>p$.  Then $W=W^{1,q}(\Omega)$.
Consider the Nehari manifold defined by
\begin{align}\label{def_Nehari}\mathcal{N}_{\lambda}&=\{v\in W\backslash\{0\}\;|\;\langle J'_\lambda (v),v\rangle=0\}\nonumber\\
    &=\{v\in W\backslash\{0\}\;|\; \|v \|_{c_1,p}^p+\|v \|_{c_2,q}^q=\lambda K_q(v)\}.
\end{align}
Define $J_{\lambda}:  \mathcal{N}_{\lambda}\to (0,\infty) $.
 Observe that if $v\in \mathcal{N}_{\lambda}$ then from \eqref{def_Nehari}  
\begin{equation}\label{J_rest_eta}
    J_{\lambda}(v)=\frac{q-p}{pq}\|v \|_{c_1,p}^p.
\end{equation}
\begin{lem}\label{2-exist-eigen}
     If $q>p$ and $\lambda>\Lambda_q$, then
     \begin{itemize}
         \item[(i)] $ \mathcal{N}_{\lambda}\neq \emptyset.$
         \item[(ii)] Every minimizing sequence  for $J_{\lambda}$ restricted to $\mathcal{N}_{\lambda}$ is bounded in $W$, that is, if $\left(u_{n}\right) \subseteq \mathcal{N}_{\lambda}$ satisfies 
\begin{equation}\label{Jlambda}
\lim_{n\to \infty} J_\lambda (u_n) = m_{\lambda} :=\inf _{w \in \mathcal{N}_{\lambda}} J_{\lambda}(w), 
\end{equation}
then $(u_n)$ is bounded in $W$.
         \item[(iii)]  $m_{\lambda}>0$.
         \item[(iv)] 
     There exists $u_{*} \in \mathcal{N}_{\lambda}$ where ${J}_{\lambda}$ attains its minimal value, $\displaystyle {m_{\lambda}=J_{\lambda}(u_*)}$.
     \end{itemize}
     
\end{lem}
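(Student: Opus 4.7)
The plan is to handle the four parts sequentially, exploiting the fibering map $t\mapsto J_\lambda(tu)$ and the identity \eqref{J_rest_eta} throughout. For (i), Proposition \ref{RemarkH} gives, for each $\lambda>\Lambda_q$, some $v\in\mathcal{C}\setminus\eta$ with $\|v\|_{c_2,q}^q<\lambda K_q(v)$. Since $q>p$, the equation $\langle J'_\lambda(tv),tv\rangle=0$ reduces to $t^{q-p}=\|v\|_{c_1,p}^p/(\lambda K_q(v)-\|v\|_{c_2,q}^q)$, which has a unique positive root $t_0$; by construction $t_0 v\in\mathcal{N}_\lambda$.

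For (ii), the identity \eqref{J_rest_eta} together with $J_\lambda(u_n)\to m_\lambda$ yields boundedness of $\|u_n\|_{c_1,p}$. The nontrivial step is to upgrade this to boundedness in $W=W^{1,q}(\Omega)$, and I argue by contradiction. If $\|u_n\|_{c_2,q}\to\infty$, I normalize $v_n:=u_n/\|u_n\|_{c_2,q}$ and pass to a subsequence with $v_n\rightharpoonup v$ in $W^{1,q}$ and strong convergence in $L^q(\Omega)$ and $L^q(\partial\Omega)$. Because $\|v_n\|_{c_1,p}\to 0$, weak lower semicontinuity in $W^{1,p}(\Omega)$ (via the continuous embedding $W^{1,q}\hookrightarrow W^{1,p}$) together with the equivalence of $\|\cdot\|_{c_1,p}$ with the standard norm of $W^{1,p}(\Omega)$ (from (C1)) forces $v\equiv 0$, hence $K_q(v_n)\to 0$. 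Dividing the Nehari identity by $\|u_n\|_{c_2,q}^q$ produces $\|v_n\|_{c_1,p}^p\,\|u_n\|_{c_2,q}^{p-q}+1=\lambda K_q(v_n)$, whose right-hand side tends to $0$ while the left-hand side is $\geq 1$, a contradiction.

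Part (iii) is the main technical obstacle. Assume $m_\lambda=0$ and pick a minimizing sequence with $\|u_n\|_{c_1,p}\to 0$. By (ii), after extraction $u_n\rightharpoonup u$ in $W$; the same WLSC argument forces $u\equiv 0$, and strong $L^q$ and trace convergence then give $K_q(u_n)\to 0$, so the Nehari identity forces $\|u_n\|_{c_2,q}\to 0$. Now rescale $v_n:=u_n/\|u_n\|_{c_2,q}$ and extract $v_n\rightharpoonup v$. The rescaled Nehari identity reads $\|v_n\|_{c_1,p}^p\,\|u_n\|_{c_2,q}^{p-q}+1=\lambda K_q(v_n)$, and I split into two cases: if $v=0$ then $K_q(v_n)\to 0$, so the right-hand side tends to $0$ whereas the left-hand side is $\geq 1$; if $v\neq 0$, then $\|v\|_{c_1,p}>0$ and WLSC gives $\liminf\|v_n\|_{c_1,p}^p>0$, while $p-q<0$ and $\|u_n\|_{c_2,q}\to 0$ make $\|u_n\|_{c_2,q}^{p-q}$ blow up along a subsequence, so the left-hand side tends to $\infty$ while the right-hand side stays bounded. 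Both cases are impossible, so $m_\lambda>0$.

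For (iv), a minimizing sequence is bounded by (ii); pass to $u_n\rightharpoonup u_*$ in $W$, with $K_q(u_n)\to K_q(u_*)$ by compact embeddings. If $u_*=0$ then $K_q(u_n)\to 0$ and the Nehari identity forces $\|u_n\|_{c_1,p}\to 0$, so $J_\lambda(u_n)\to 0$, contradicting (iii). Weak lower semicontinuity yields $\|u_*\|_{c_1,p}^p+\|u_*\|_{c_2,q}^q\leq\lambda K_q(u_*)$. If this inequality were strict, the fibering map applied to $u_*$ would produce a unique $t_*\in(0,1)$ with $t_*u_*\in\mathcal{N}_\lambda$, and then $J_\lambda(t_*u_*)=\frac{q-p}{pq}t_*^p\|u_*\|_{c_1,p}^p<\frac{q-p}{pq}\|u_*\|_{c_1,p}^p\leq\liminf J_\lambda(u_n)=m_\lambda$, contradicting $t_*u_*\in\mathcal{N}_\lambda$. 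Hence equality holds, $u_*\in\mathcal{N}_\lambda$, and the same WLSC estimate gives $J_\lambda(u_*)\leq m_\lambda$, so $J_\lambda(u_*)=m_\lambda$.
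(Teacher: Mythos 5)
Your proof is correct and follows essentially the same route as the paper: an explicit scaling into the Nehari manifold for (i), normalization-plus-contradiction arguments for (ii) and (iii), and weak lower semicontinuity combined with the fibering map for (iv). The only notable variation is in (iii), where you normalize by $\|u_n\|_{c_2,q}$ and split cases on whether the weak limit of the normalized sequence vanishes, whereas the paper normalizes by the $L^q$-norm $\|u_n\|_q$ and contradicts $\|w_n\|_q=1$ directly; both arguments are valid.
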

\begin{proof}

\begin{itemize}
    \item[(i)] We will show that $\mathcal{N}_{\lambda}\ne \emptyset$. Indeed,
since $\lambda>\Lambda_{q}$, it follows from  \eqref{Lambda_q}  that there exists $v_{0} \in W \backslash\{0\}$ such that $\|v_0\|_{c_2,q}^q<\lambda K_q(v_{0})$. 
We claim that there exists $\tau>0$, such that $\tau v_{0} \in \mathcal{N}_{\lambda}$.  Observe that  $\tau v_{0} \in \mathcal{N}_{\lambda}$ means  that 
\begin{equation}\label{tau_v_1}
    \tau^{p} \|v_0\|_{c_1,p}^p+\tau^{q} \|v_0 \|_{c_2,q}^q=\lambda \tau^{q}K_q(v_{0}).
\end{equation}
Dividing  by $\tau^{p},$ equation \eqref{tau_v_1} is equivalent to 
\begin{equation}\label{tau_v_2}
    \|v_0 \|_{c_1,p}^p+\tau^{q-p} \|v_0 \|_{c_2,q}^q=\lambda \tau^{q-p}K_q(v_{0}).\end{equation}

Taking into account that $\lambda K_q(v_{0})-\|v_0\|_{c_2,q}^q>0$, we see that equation \eqref{tau_v_2} can be solved for $\tau$, with 
\begin{equation}\label{tau_v_3}
\tau=\left(\frac{\|v_0\|_{c_1,p}^p}{\lambda K_q(v_{0})-\|v_0\|_{c_2,q}^q}\right)^{\frac{1}{q-p}}
,
\end{equation}
which is positive as $\|v_{0}\|_{c_1,p}>0$. Therefore, choosing  $\tau$ as in \eqref{tau_v_3}, we have $\tau v_{0} \in \mathcal{N}_{\lambda}$.
\item[(ii)]  Let 
$\left(u_{n}\right) \subseteq \mathcal{N}_{\lambda}$ be a minimizing sequence for $J_{\lambda}$. We claim that $(u_{n}) $  is bounded in $W$. Assume to the contrary that $\left(u_{n}\right)$ is unbounded in $W$. Then there exists a subsequence, again denoted $\left(u_{n}\right)$, such that $\left\|u_{n}\right\|_{c_2,q} \rightarrow \infty$. 
Define $v_{n}=u_{n} /\left\|u_{n}\right\|_{c_2,q}.$ Then $\left\|v_{n}\right\|_{c_2,q}=1. $ Since $W$ is reflexive,  one has (by going to a subsequence relabeled $(v_n)$, if need be) that there is $v_{0} \in W$ such that $v_{n} \rightharpoonup v_{0}$ in $W$. Due to the compact embedding of $W^{1,q}(\Omega)$ in $ L^q(\Omega)$ and the continuous embedding of  $L^q(\Omega)$ into $L^p(\Omega)$, (for a subsequence similarly relabeled) $v_{n} \rightharpoonup v_{0}$ in $W^{1, p}(\Omega)$ and  $v_{n} \rightarrow v_{0}$ in $L^{q}(\Omega)$  and $v_{n} \rightarrow v_{0}$ in $L^{p}(\Omega)$.  From \eqref{J_rest_eta} and \eqref{Jlambda}, we have
\begin{equation}\label{Nehari_0}
    \lim_{n\to \infty}\frac{q-p}{q p} \|u_n\|_{c_1,p}^p= m_{\lambda}.
\end{equation}
Since $\displaystyle \lim_{n\to \infty} \|u_n\|_{c_2, q}=\infty$, then from \eqref{Nehari_0} we get 
\begin{equation*}
\lim_{n\to \infty}\frac{q-p}{q p} \|v_n\|_{c_1,p}^p=\lim_{n\to \infty}\frac{\frac{q-p}{q p} \|u_n\|_{c_1,p}^p} {\left\|u_{n}\right\|_{c_2,q}^p} = 0.
\end{equation*}
This implies $ v_n\to 0$ in $W^{1,p}(\Omega)$ as $n\to \infty$. By the continuous embedding of $W^{1,p}(\Omega)$ in $L^p(\Omega)$, it follows that $v_n \to 0$ in $L^p(\Omega)$ as $n\to \infty$. Hence, $v_0(x)=0$ a.e. $x\in \Omega$, which implies $\|v_{n}\|_{q}\to 0$  as $n\to \infty$.
Using the fact that  $u_n \in \mathcal{N}_\lambda$, we have that  
\begin{equation}\label{Nehari_1}0 \leqslant \|u_n \|_{c_1,p}^p=\lambda K_q(u_{n})-\|u_n \|_{c_2,q}^q , \qquad \forall n  \geqslant 1.  \end{equation}
Dividing \eqref{Nehari_1} by $\|u_n\|_{c_2,q}^q$ and taking into account  \eqref{bound-K_q}, we get
\begin{equation}\label{vn}
0 \leqslant \frac{\|u_n \|_{c_1,p}^p}{\|u_n\|_{c_2,q}^q}=\lambda C_1 \|v_{n}\|_{q}^{q}-1 ,\ \ \forall n \geqslant 1 .
\end{equation}
Taking the limit as  $n\to\infty$ in \eqref{vn}  we get $0\leqslant -1.$ This is a contradiction. Thus, $\left(u_{n}\right)$ is bounded in $W$.

\item[(iii)] 
Let us show that $m_{\lambda} >0$. Assume by contradiction that $m_{\lambda}=0$ and let $\left(u_{n}\right) \subseteq \mathcal{N}_{\lambda}$ be a minimizing sequence for $J_{\lambda}$. From $\mbox{(ii)}$  $\left(u_{n}\right)$ is bounded in $W$. 
Since $W$ is reflexive, there exists a subsequence, relabeled again $(u_n)$, such that  $u_{n} \rightharpoonup u_{0}$  in $W$, $u_n\rightharpoonup u_{0}$ in $W^{1, p}(\Omega)$ and $u_{n} \rightarrow u_{0}$ in both $L^{q}(\Omega) $ and  $L^{p}(\Omega)$ because of  the compact embedding of $W^{1,q}(\Omega)$ in $ L^q(\Omega)$ and the continuous embedding of  $L^q(\Omega)$ into $L^p(\Omega)$.
From \eqref{J_rest_eta}, \eqref{Jlambda}  and the fact that $m_\lambda=0$, we have $\|u_n \|_{c_1,p}^p\rightarrow 0.$ Therefore $u_n\to 0 \mbox{ in } L^p({\Omega})$. Hence,  by the uniqueness of limit,  $u_{0} \equiv 0.$
Define $w_{n}=u_{n} /\left\|u_{n}\right\|_{ q}, n \geq 1$.  Observe  $(w_n)$ is bounded in $L^q(\Omega)$ and $\|w_n\|_q=1$ for all $n$. We assert that $\left(w_{n}\right)$ is also bounded in $W$.
Indeed, since $\left(u_{n}\right) \subseteq \mathcal{N}_{\lambda}$, then it follows from \eqref{def_Nehari}  that   $u_n\neq 0$ for each $n$ and $\|u_n\|_{c_1,p}^p+\|u_n \|_{c_2,q}^q=\lambda K_q(u_n)$.  Dividing by $\|u_n\|_q^q$ and taking into account \eqref{bound-K_q}, we obtain  
\begin{equation}\label{w_necu}
\frac{\|w_n \|_{c_1,p}^p}{\left\|u_{n}\right\|_{q}^{q-p}}+\|w_n \|_{c_2,q}^q=\lambda \frac{K_q(u_n)}{\|u_n\|_q^q} \leqslant \lambda C_1.
\end{equation}
Therefore, 
\begin{equation*}\label{w_bound}
    \|w_n \|_{c_2,q}^q\leqslant\lambda C_1,
\end{equation*}
which implies that $\left(w_{n}\right)$ is bounded in $W$. By the reflexivity of $W$, there exists a subsequence, relabeled again $(w_n)$, such that  $w_{n} \rightharpoonup w_{0}$  in $W$, $w_{n} \rightharpoonup w_{0}$ in $W^{1, p}(\Omega)$  and $w_{n} \rightarrow w_{0}$ in $L^{q}(\Omega).$   
From \eqref{w_necu}, we have 
\begin{equation}\label{w_eq}
    \|w_n \|_{c_1,p}^p\leqslant \left\|u_{n}\right\|_{q}^{q-p}(\lambda C_1- \|w_n \|_{c_2,q}^q).
\end{equation}

Using the fact $ (w_n )$ is bounded in $W$,  $u_{n} \rightarrow 0$ in $L^{q} (\Omega)$ and $q-p>0$, we have that \eqref{w_eq} leads to  $\|w_n \|_{c_1,p}\rightarrow 0.$ Then $w_n\rightarrow 0$ in $W^{1,p}(\Omega).$  By the uniqueness of limit, $w_0=0$. Therefore, $w_{n} \rightarrow 0$ in $L^{q}(\Omega)$. Hence, $\|w_n\|_q\to 0$ and $n\to \infty$.  This contradicts the fact that $\|w_n\|_q=1$ for all $n$.
\vspace{0.3cm}

\item[(iv)] Let us show that there exists $u_{*} \in \mathcal{N}_{\lambda}$ such that $J_{\lambda}\left(u_{*}\right)=m_{\lambda}$.\\
Let $\left(u_{n}\right) \subseteq \mathcal{N}_{\lambda}$ be a minimizing sequence, that is, $J_{\lambda}\left(u_{n}\right)=\frac{q-p}{pq}\|u_n \|_{c_1,p}^p \rightarrow m_{\lambda}$ as $n\to \infty$. Then \begin{equation}\label{K_p}
    \lim_{n\to\infty}\|u_n \|_{c_1,p}^p=\frac{qp}{q-p}m_{\lambda}.
\end{equation}
By $\mbox{(ii)}$ $\left(u_{n}\right)$ is bounded in $W$,  and therefore there exists a subsequence $\left(u_{n}\right)$ such that $u_{n} \rightharpoonup u_{*} \mbox{ in } W$ and so, $u_{n} \rightharpoonup u_{*} \in W^{1,p}(\Omega)$ and $u_{n} \to u_{*}$ in $L^{q}(\Omega)$ and $L^{p}( \Omega)$.

It follows from the properties of weak convergence  that
\begin{equation}\label{kp}
    \|u_* \|_{c_1,p}^p \leqslant \liminf_{n\to \infty} \|u_n \|_{c_1,p}^p
\end{equation}
and 
\begin{equation}\label{kq}
    \|u_*\|_{c_2,q}^q\leqslant \liminf_{n\to \infty} \|u_n \|_{c_2,q}^q.
\end{equation}
Since $(u_n)\subseteq \mathcal{N}_{\lambda}$, from the continuity of $K_q$ (see Proposition \ref{prop-cu}) it follows that 
\begin{align*}
    \lambda K_q(u_{*})&= \lambda \lim_{n\to\infty} K_q(u_n)=  \liminf_{n\to \infty}( \|u_n \|_{c_1,p}^p+\|u_n \|_{c_2,q}^q).
    \end{align*}

By \eqref{kp} and \eqref{kq} and the properties of the liminf, we have 

\begin{equation}\label{deskpkq}
    \|u_*\|_{c_1,p}^p+\|u_* \|_{c_2,q}^q\leqslant  \lambda K_q(u_{*}) .
\end{equation}
We assert that $u_{*}\neq 0.$ Assume to the contrary that $u_{*}= 0.$
Using the fact that $u_n\in \mathcal{N}_{\lambda},$ we have that $\|u_n \|_{c_2,q}^q=\lambda K_q(u_n)-\|u_n \|_{c_1,p}^p.$ Since $\lim_{n\to\infty}K_q(u_n)=K_q(0)=0$, then by  \eqref{K_p} we get \[\lim_{n\to\infty} \|u_n \|_{c_2,q}^q=\lambda\lim_{n\to\infty} K_q(u_n)-\lim_{n\to\infty} \|u_n \|_{c_1,p}^p=-\frac{qp}{q-p}m_{\lambda}<0.\]
This leads to a contradiction. Thus, $u_{*}\neq 0.$\\
Now, let us prove that  $\|u_*\|_{c_1,p}^p+\|u_* \|_{c_2,q}^q=\lambda K_q(u_{*})$. Assume by contradiction that the equality does not hold. Then by \eqref{deskpkq}, 
\begin{equation}\label{hpc}
\|u_*\|_{c_1,p}^p+\|u_* \|_{c_2,q}^q<\lambda K_q(u_{*}).
\end{equation}
We claim that there is a constant $\tau \in(0,1)$ such that $\tau u_{*} \in \mathcal{N}_{\lambda}$. To see this,  define a continuous function $g:(0, \infty) \rightarrow \mathbb{R}$ by
$
 g(t):=t^{p-q}\|u_*\|_{c_1,p}^p+\|u_* \|_{c_2,q}^q-\lambda K_q(u_{*}).
$

As $\|u_*\|_{c_1,p}^p>0$ and $p<q$, we have $\lim_{t \rightarrow 0^{+}}g(t)=\infty$.  By \eqref{hpc} we notice that $g(1)<0$. Therefore, there exists $\tau \in(0,1)$ such that $g(\tau)=0$ which implies $\tau u_{*} \in \mathcal{N}_{\lambda}$.
Since $u_{n} \rightharpoonup u_{*} \in W^{1,p}(\Omega)$, we have 
\begin{equation}\label{semi}
    \frac{q-p}{pq}\|u_*\|_{c_1,p}^p \leqslant \liminf_{n\to \infty} \frac{q-p}{pq}\|u_n\|_{c_1,p}^p .
\end{equation}
By \eqref{J_rest_eta}, \eqref{Jlambda}, \eqref{semi} and the fact that $\tau\in (0,1)$ we obtain
$$
m_{\lambda} \leqslant J_{\lambda}\left(\tau u_{*}\right)
\leqslant \tau^{p} \lim _{n \rightarrow \infty} J_{\lambda}\left(u_{n}\right)=\tau^{p} m_{\lambda}<m_{\lambda}.
$$
This is a contradiction. So, 
\begin{equation*}\label{deskp}
\|u_*\|_{c_1,p}^p+\|u_* \|_{c_2,q}^q=\lambda K_q(u_{*}).
\end{equation*}
This implies that  $u_{*}\in \mathcal{N}_{\lambda}$.  By \eqref{Jlambda} and \eqref{semi}, we have

$$m_{\lambda} \leqslant J_{\lambda}\left(u_{*}\right)= \frac{q-p}{q p} \|u_*\|_{c_1,p}^p \leqslant \liminf_{n\to \infty} \frac{q-p}{pq}\|u_n\|_{c_1,p}^p = m_{\lambda}.
$$
Thus $J_{\lambda}\left(u_{*}\right)=m_{\lambda}$. This completes the proof.
\end{itemize}
\end{proof}

\begin{lem}\label{lagrange_lambda} 
    If $p<q$ and $\lambda >\Lambda_q$ then $\lambda$ is an eigenvalue of Problem \eqref{eig-prob_1}.
\end{lem}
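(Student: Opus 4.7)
The plan is to promote the constrained minimizer $u_*\in\mathcal{N}_{\lambda}$ constructed in Lemma \ref{2-exist-eigen}(iv) to a free critical point of $J_\lambda$ on $W$ via the Lagrange multiplier rule; once $J'_\lambda(u_*)=0$, Definition \ref{def_eig} yields that $\lambda$ is an eigenvalue with eigenfunction $u_*$.

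First, I define the constraint functional
\begin{equation*}
G(v):=\|v\|_{c_1,p}^p+\|v\|_{c_2,q}^q-\lambda K_q(v),\qquad v\in W,
\end{equation*}
so that $\mathcal{N}_\lambda=\{v\in W\setminus\{0\}: G(v)=0\}$ by \eqref{def_Nehari}. Propositions \ref{prop-nabla} and \ref{prop-cu} together with the continuity of the trace give $G\in C^1(W,\mathbb{R})$ and
\begin{equation*}
\langle G'(v),w\rangle=p\,\langle I_1'(v),w\rangle_p+q\,\langle I_2'(v),w\rangle_q-\lambda q\Bigl(\int_\Omega m|v|^{q-2}vw+\int_{\partial\Omega}\rho|v|^{q-2}vw\Bigr),
\end{equation*}
where I use the shorthands $\langle I_1'(v),w\rangle_p$ and $\langle I_2'(v),w\rangle_q$ for the full $p$- and $q$-contributions (gradient plus weighted zero-order terms) coming from $\|\cdot\|_{c_1,p}^p$ and $\|\cdot\|_{c_2,q}^q$ respectively. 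Testing with $w=u_*$ and invoking $u_*\in\mathcal{N}_\lambda$,
\begin{equation*}
\langle G'(u_*),u_*\rangle=p\|u_*\|_{c_1,p}^p+q\|u_*\|_{c_2,q}^q-\lambda q K_q(u_*)=(p-q)\|u_*\|_{c_1,p}^p.
\end{equation*}

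The next step is crucial: since $p<q$ and $u_*\neq 0$, condition (C1) guarantees that $\|u_*\|_{c_1,p}>0$ (the two norms on $W^{1,p}(\Omega)$ are equivalent), so $\langle G'(u_*),u_*\rangle<0$; in particular $G'(u_*)\neq 0$. Hence $\mathcal{N}_\lambda$ is a $C^1$ Banach manifold in a neighborhood of $u_*$, and the constrained minimization $J_\lambda(u_*)=\min_{\mathcal{N}_\lambda}J_\lambda$ provided by Lemma \ref{2-exist-eigen}(iv) supplies a Lagrange multiplier $\mu\in\mathbb{R}$ with $J'_\lambda(u_*)=\mu G'(u_*)$ in $W^*$.

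To conclude I pair this identity against $u_*$: the left-hand side vanishes because $\langle J'_\lambda(u_*),u_*\rangle=G(u_*)=0$ by the very definition of $\mathcal{N}_\lambda$, while the right-hand side equals $\mu(p-q)\|u_*\|_{c_1,p}^p$. The coefficient is nonzero, forcing $\mu=0$, and therefore $J'_\lambda(u_*)=0$ in $W^*$. Thus $u_*$ is a nontrivial critical point of $J_\lambda$, which by Definition \ref{def_eig} means $\lambda$ is an eigenvalue of \eqref{eig-prob_1}. The only place where delicacy is required is the Lagrange multiplier step: the computation $\langle G'(u_*),u_*\rangle=(p-q)\|u_*\|_{c_1,p}^p$ is what simultaneously certifies that $\mathcal{N}_\lambda$ is a smooth constraint near $u_*$ and that the multiplier must vanish, and this is precisely where the sign hypothesis $p<q$ (equivalently the assumption that we are in the $q>p$ regime of Lemma \ref{2-exist-eigen}) is used in an essential way.
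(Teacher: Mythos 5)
Your proposal is correct and follows essentially the same route as the paper: both take the minimizer $u_*$ from Lemma \ref{2-exist-eigen}(iv), verify that the constraint functional has nonvanishing derivative at $u_*$ via the computation $\langle G'(u_*),u_*\rangle=(p-q)\|u_*\|_{c_1,p}^p\neq 0$ (the paper phrases this as surjectivity of $g_q'(u_*)$ onto $\mathbb{R}$), invoke the Lagrange multiplier rule, and kill the multiplier by pairing the identity with $u_*$ and using $u_*\in\mathcal{N}_\lambda$. The only cosmetic difference is that the paper uses the Fritz John form with a pair $(\alpha,\mu)\neq(0,0)$, while you use the standard form $J'_\lambda(u_*)=\mu G'(u_*)$ directly; these are interchangeable once $G'(u_*)\neq 0$ is established.
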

\begin{proof}
   Fix $\lambda>\Lambda_q$. We assert that the minimizer $u_*\in \mathcal{N}_\lambda$ obtained in Lemma \ref{2-exist-eigen} is an eigenfunction of Problem \eqref{eig-prob_1} corresponding to the eigenvalue $\lambda$.
    Clearly $u_*$ is a solution to the following constrained problem
    \begin{equation*}
        \min_{v\in W\setminus\{0\}}J_\lambda(v), \quad g_q(v):=\|v\|_{c_1,p}^p+\|v\|_{c_2,q}^q-\lambda K_q(v)=0.
    \end{equation*}
  Observe that given $\xi \in \mathbb{R}$, there exists $\chi \in \mathbb{R}$ such that $\xi=\langle g'_q(u_*), \chi u_*\rangle$ if and only if 
    \begin{equation}\label{lagrange-3}
        \xi = \chi \left(p\|u_*\|_{c_1, p}^p+q\|u_*\|_{c_2, q}^q-\lambda qK_q(u_*)\right).
    \end{equation}
    Since \begin{equation}\label{u_*-neq}
        p\|u_*\|_{c_1, p}^p+q\|u_*\|_{c_2, q}^q-\lambda qK_q(u_*)=(p-q)\|u_*\|_{c_1, p}^p\neq 0 ,
    \end{equation}  then equation \eqref{lagrange-3} is equivalent to 
    \begin{equation*}
     \chi =\frac{\xi}{(p-q)\|u_*\|_{c_1, p}^p}.   
    \end{equation*}
    From this chain of equivalences we infer that the linear functional $g'_q(u_*):W \to \mathbb{R}$ is surjective. 
    From [\cite{GP}, Theorem 5.5.26, p. 701] there exist $\alpha, \mu \in \mathbb{R}$, with $(\alpha, \mu)\neq (0,0)$ such that \begin{equation}\label{lagrange-2}
        \alpha\langle J'_\lambda (u_*), v\rangle+\mu \langle g'_q(u_*), v \rangle=0, \qquad \text{for all $v\in W$}.
    \end{equation} 
    In particular, with $v=u_*$ and taking into account that $u_*\in\mathcal{N}_\lambda$ we get
    \begin{equation}\label{lagrange-1}
        \mu \left(p\|u_*\|_{c_1, p}^p+q\|u_*\|_{c_2, q}^q-\lambda qK_q(u_*)\right)=0.
    \end{equation}
From \eqref{u_*-neq} and \eqref{lagrange-1}, we see that $\mu=0$. Going back to \eqref{lagrange-2} we have
    \begin{equation*}
      \alpha\langle J'_\lambda (u_*), v\rangle=0,  \qquad \text{for all $v\in W$}.
    \end{equation*}
   In view of the fact that $(\alpha, \mu)\neq (0,0)$ and $\mu=0$ leads us to 
    \begin{equation*}
        \langle J'_\lambda (u_*), v\rangle=0,  \qquad \text{for all $v\in W$},
    \end{equation*}
    i.e. $u_*\neq 0$ is an eigenfunction of Problem \eqref{eig-prob_1} corresponding to $\lambda$. Thus $\lambda>\Lambda_q$ is an eigenvalue of Problem \eqref{eig-prob_1}.
\end{proof}

\vskip0.4cm 
\noindent
{\bf Proof of Theorem  \ref{Neumman_theo}.}\mbox{}
\vskip0.2cm 
\noindent
    Let $\lambda \in \mathbb{R}$. If $\lambda \leqslant \Lambda_q$, then, by Lemma \ref{0<lambda_q} (iii), $\lambda$ is not an eigenvalue of Problem \eqref{eig-prob_1}. On the other hand, if $\lambda >\Lambda_q$, then, by Lemma \ref{1-exist-eigen}, $\lambda$ is an eigenvalue of Problem \eqref{eig-prob_1} in the case $q<p$, and the same result follows from Lemma \ref{lagrange_lambda} when $q>p$. \qed

\vskip0.4cm 
\section{Nonlinear Perturbation} 
In this section we prove Theorems \ref{main_theo} and \ref{main_theo2}  through variational arguments (see \cite[Theorem 2.7]{PR}). In order to apply  \cite[Theorem 2.7]{PR}, we need to the Palais-Smale (PS) condition. In what follows,  we first state and prove a precompactness result similar to \cite[Proposition B.35]{PR} that will allow us to verify (PS) in our setting.\\

Consider the functional $J:W \to \mathbb{R}$ defined by \begin{equation}\label{def_J} 
    J(u):= \frac{1}{p}\|u\|_{c_1,p}^p+\frac{1}{q}\|u\|_{c_2,q}^q-\int_\Omega F(x,u)  - \int_{\partial\Omega} G(x,u).
\end{equation}
Due to our hypotheses on $m, \rho, f$ and $g$, and Propositions \ref{prop-nabla} and \ref{prop-cu} we have that $J\in C^1$  and 
\begin{equation}\label{form_J'}    \langle J' (u), \varphi  \rangle= B(u, \varphi)-\int_\Omega f(x,u)\varphi - \int_{\partial\Omega} g(x,u)\varphi  , \quad \text{ for all $\varphi \in W$},
\end{equation}
where 
\begin{equation}\label{B(u,phi)}
    B(u, \varphi):=\int_\Omega |\nabla u|^{p-2}\nabla u\cdot \nabla \varphi +\int_\Omega c_1(x)| u|^{p-2} u \varphi +\int_\Omega |\nabla u|^{q-2}\nabla u\cdot \nabla \varphi+\int_\Omega c_2(x) | u|^{q-2} u \varphi  .
\end{equation}

Some  steps in the proof of the following proposition are similar to those contained in \cite[Lemma 2.3]{CdFL}.
\begin{proposition}\label{bound-PS}
Assume that  conditions (C1) and (C3)-(C5) hold and let $J$ be defined as \eqref{def_J}. If $(u_n)$ is a bounded sequence in $W$ such  that $J'(u_n) \to 0$ as $n\to \infty$. Then  $(u_n)$ has a convergent subsequence.
\end{proposition}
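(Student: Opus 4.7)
The plan is to follow the classical pseudo-monotonicity argument for $(p,q)$-Laplacian-type operators. Since $(u_n)$ is bounded in the reflexive space $W = W^{1,r}(\Omega)$, I first extract a subsequence, still denoted $(u_n)$, with $u_n \rightharpoonup u$ in $W$ for some $u\in W$. Because of the subcritical growth exponents in (C4) and (C5), the Rellich--Kondrachov theorem and the compact trace embedding yield $u_n \to u$ strongly in $L^{s_1+1}(\Omega)$ and $u_n \to u$ strongly in $L^{s_2+1}(\partial\Omega)$ (and in $L^{p}(\Omega), L^{q}(\Omega)$ as well).

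Next, I would test the equation $J'(u_n)\to 0$ against the admissible sequence $\varphi_n := u_n - u$, which is bounded in $W$. Thus $\langle J'(u_n), u_n-u\rangle \to 0$. Using (C4)--(C5) and H\"older's inequality, the nonlinear terms satisfy
\begin{equation*}
\Bigl|\int_\Omega f(x,u_n)(u_n-u)\Bigr| \le \bigl(a_1|\Omega|^{s_1/(s_1+1)} + a_2 \|u_n\|_{s_1+1}^{s_1}\bigr)\|u_n-u\|_{s_1+1} \to 0,
\end{equation*}
and similarly the boundary term tends to zero by the strong trace convergence. Combining these with \eqref{form_J'} gives $B(u_n, u_n-u) \to 0$.

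Since $u_n \rightharpoonup u$ in $W$ and hence also weakly in $W^{1,p}(\Omega)$ and $W^{1,q}(\Omega)$, the linear functional $\varphi \mapsto B(u,\varphi)$ satisfies $B(u, u_n-u) \to 0$. Subtracting, I obtain
\begin{equation*}
\langle \mathcal{A}(u_n) - \mathcal{A}(u),\, u_n - u\rangle \to 0,
\end{equation*}
where $\mathcal{A} = \mathcal{A}_p + \mathcal{A}_q$ is the operator associated to $B$ via \eqref{B(u,phi)}. The plan now is to invoke the $(S_+)$-property separately for the $p$- and $q$-pieces: both $\mathcal{A}_p$ and $\mathcal{A}_q$ are strictly monotone, so each bracket $\langle \mathcal{A}_\theta(u_n)-\mathcal{A}_\theta(u), u_n-u\rangle$ is nonnegative, and the sum vanishing forces each piece to vanish. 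From the standard pointwise inequalities comparing $(|\xi|^{\theta-2}\xi - |\eta|^{\theta-2}\eta)\cdot(\xi-\eta)$ with $|\xi-\eta|^{\theta}$ (for $\theta\ge 2$) or with $|\xi-\eta|^2(|\xi|+|\eta|)^{\theta-2}$ (for $1<\theta<2$), one then concludes $\nabla u_n \to \nabla u$ in $L^\theta(\Omega)$ for $\theta\in\{p,q\}$, and the $c_i$-weighted terms give $u_n\to u$ in $L^\theta(\Omega)$. Combining, $u_n \to u$ strongly in $W$.

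The main obstacle is the last step: handling the $(p,q)$-structure with $p\ne q$ in the subquadratic regime, where the two natural monotonicity inequalities have different forms. I would deal with this by estimating separately on the set where $|\nabla u_n|+|\nabla u|$ is bounded below (where the $\theta\ge 2$ inequality works modulo a rescaling via H\"older with exponent pair $(2/\theta, 2/(2-\theta))$) and on the complementary set, a standard splitting technique. Alternatively, since both $\mathcal{A}_p$ and $\mathcal{A}_q$ are of type $(S_+)$ on their respective Sobolev spaces and the embedding $W\hookrightarrow W^{1,\min\{p,q\}}(\Omega)$ is continuous, the nonnegativity of each bracket gives the strong convergence in both $W^{1,p}$ and $W^{1,q}$, and hence in $W=W^{1,r}(\Omega)$.
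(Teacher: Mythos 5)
Your proposal is correct in outline, and its first half (extracting a weakly convergent subsequence, killing the $f$- and $g$-terms via (C4)--(C5), H\"older, and the compact embeddings, and thereby reducing to $B(u_n,u_n-u)\to 0$ and then to $\langle \mathcal{A}(u_n)-\mathcal{A}(u),u_n-u\rangle\to 0$ with each monotone bracket nonnegative) coincides with the paper's. Where you diverge is the final step. You invoke the $(S_+)$-property of each piece via the pointwise inequalities comparing $(|\xi|^{\theta-2}\xi-|\eta|^{\theta-2}\eta)\cdot(\xi-\eta)$ with $|\xi-\eta|^{\theta}$ (for $\theta\ge 2$) or $|\xi-\eta|^{2}(|\xi|+|\eta|)^{\theta-2}$ (for $1<\theta<2$), which forces you into the case analysis you flag as the ``main obstacle.'' The paper sidesteps this entirely: from H\"older it only extracts the weaker normed inequality
\begin{equation*}
\int_\Omega \bigl(|\nabla u_n|^{\theta-2}\nabla u_n-|\nabla u_*|^{\theta-2}\nabla u_*\bigr)\cdot(\nabla u_n-\nabla u_*)\;\geqslant\;\bigl(\|\nabla u_n\|_\theta^{\theta-1}-\|\nabla u_*\|_\theta^{\theta-1}\bigr)\bigl(\|\nabla u_n\|_\theta-\|\nabla u_*\|_\theta\bigr)\geqslant 0,
\end{equation*}
which yields only $\|\nabla u_n\|_\theta\to\|\nabla u_*\|_\theta$; combined with the strong $L^p(\Omega)$ and $L^q(\Omega)$ convergence of $u_n$ this gives $\|u_n\|_{c_i,\theta}\to\|u_*\|_{c_i,\theta}$, and then weak convergence plus norm convergence in the uniformly convex space $W$ gives strong convergence by the Radon--Riesz property. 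Your route, once the subquadratic splitting is carried out, buys the slightly stronger local conclusion $\nabla u_n\to\nabla u$ in each $L^\theta(\Omega)$ without appealing to the geometry of the ambient space; the paper's route is shorter and uniform in $\theta>1$. One small caveat: the $c_i$-weighted brackets do not by themselves give $u_n\to u$ in $L^\theta(\Omega)$ (the $c_i$ may vanish on sets of positive measure), but this is harmless since the compact embedding already supplies that convergence.
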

\begin{proof}
    Observe that $W$ is reflexive  and is compactly embedded in $L^{m_1}(\Omega)$ and  in $ L^{m_2}(\partial\Omega)$ with any $1\le m_1<r^*$ and $1\le m_2<r_*$, where $r^*$ and $r_*$ are defined in $(C4)$ and $(C5)$. Since $(u_n)$ is bounded in $W$, then there exists a subsequence (which we shall denote again $(u_n)$) such that $u_n \rightharpoonup u_*$ in $W$, $u_n \to u_*$ in $L^p(\Omega)$, $u_n \to u_*$ in $L^q(\partial\Omega)$  and $u_n \to u_*$ in $L^q(\Omega)$.
 Using the fact that $J'(u_n)\to 0$ we have that 
 \begin{equation}\label{J'(u_n)}
     \langle J'(u_n), u_n-u_* \rangle \to 0,\qquad \text{as $n\to \infty$}.
 \end{equation}
Notice that from (C4)  with $\alpha_1:=(s_1+1)'$ (the Hölder conjugate of $s_1+1$ and therefore $\alpha_1 s_1=s_1+1$) and taking into account that $(u_n)$ is bounded in $L^{s_1+1}(\Omega)$ we have
 \begin{eqnarray}\label{f_alpha_1}
     \int_\Omega |f(x,u_n)|^{\alpha_1}  
     &\leqslant & \hat{c}|\Omega|+\hat{c}\hat{a}\|u_n\|_{s_1+1}^{s_1+1}\leqslant C. 
 \end{eqnarray}
 for some constant $C$ independent of $n$.
 Therefore, applying Hölder inequality with $\alpha_1$ and using \eqref{f_alpha_1} we have
 \begin{eqnarray}\label{int_f_1}
\left|     \int_{\Omega} f(x, u_n) (u_n-u_*)  \right| 
&\leqslant & C\|u_n-u_*\|_{s_1+1}^{s_1+1} .
 \end{eqnarray}
Similarly, since $(u_n)$ is bounded in $L^{s_2+1}(\partial \Omega)$, from (C5) with $\alpha_2:=(s_2+1)'$ we get 
 \begin{eqnarray}\label{int_f_2}
\left|     \int_{\partial\Omega} g(x, u_n) (u_n-u_*)  \right| 
&\leqslant & C\|u_n-u_*\|_{\partial\Omega, s_2+1}^{s_2+1}
,
 \end{eqnarray}
for some constant $C$ independent of $n$.
From \eqref{int_f_1}
 and \eqref{int_f_2}   and taking into account that $u_n \to u_*$ in $L^{s_1+1}(\Omega)$ and $u_n \to u_*$ in $L^{s_2+1}(\partial\Omega)$, we have  
\begin{equation}\label{f_g}
\int_\Omega f(x, u_n) (u_n-u_*)  \to 0 \qquad \text{and} \qquad      \int_{\partial\Omega} g(x, u_n) (u_n-u_*)  \to 0 ,\qquad \text{as $n\to \infty$}.
 \end{equation}    Using \eqref{form_J'} with $\varphi=u_n-u_*$, \eqref{J'(u_n)} and  \eqref{f_g} , we see that
     \begin{equation}\label{B_goes_0_1}
         B(u_n, u_n-u_*) \to 0,\qquad \text{as $n\to \infty$}.
     \end{equation}
Furthermore,     by the Hölder inequality, we have
     \begin{equation*}
  \left|       \int_\Omega c_1(x) |u_n|^{p-2}u_n(u_n-u_*)  \right| 
  \leqslant \|c_1\|_\infty\|u_n\|_p^{p-1}\|u_n-u_*\|_p
.     \end{equation*}
Since  $u_n \to u_*$ in $L^p(\Omega)$, we see that $\displaystyle \int_\Omega c_1(x)|u_n|^{p-2}u_n(u_n-u_*)  \to 0$. Similarly, we get that
$\displaystyle \int_\Omega c_2(x)|u_n|^{q-2}u_n(u_n-u_*)  \to 0$. Hence, from \eqref{B(u,phi)} and \eqref{B_goes_0_1}, we obtain
\begin{equation}\label{grad(u_n-u)_1}
    \int_\Omega |\nabla u_n|^{p-2} \nabla u_n \cdot (\nabla u_n-\nabla u_*)  +     \int_\Omega |\nabla u_n|^{q-2} \nabla u_n \cdot (\nabla u_n-\nabla u_*)  \to 0, \qquad \text{as }n\to \infty.
\end{equation}

On the other hand, since $u_n \rightharpoonup u_* $ in $W$  then, for any $\theta \in \{p,\, q\} $ 
\begin{equation}\label{grad(u_n-u)_2}
       \int_\Omega |\nabla u_*|^{\theta-2} \nabla u_* \cdot (\nabla u_n-\nabla u_*)  \to 0 \qquad \text{as }n\to \infty .
\end{equation}
Additionally, using Hölders inequalities, we get
\begin{align*}\label{|u_n-u|1}
    \int_\Omega (|\nabla u_n|^{\theta-2} \nabla u_n-|\nabla u_*|^{\theta-2} \nabla u_* )\cdot (\nabla u_n-\nabla u_*)  
      &\geqslant (\|\nabla u_n\|_\theta^{\theta-1}-\|\nabla u_*\|_\theta^{\theta-1})(\|\nabla u_n\|_\theta-\|\nabla u_*\|_\theta).
\end{align*}
Then
\begin{eqnarray}\label{chain_1}
    &&\int_\Omega (|\nabla u_n|^{p-2} \nabla u_n-|\nabla u_*|^{p-2} \nabla u_* )\cdot (\nabla u_n-\nabla u_*)  \nonumber +  \\
    && \int_\Omega (|\nabla u_n|^{q-2} \nabla u_n -|\nabla u_*|^{q-2} \nabla u_*) \cdot (\nabla u_n-\nabla u_*)  \nonumber\\
    && \geqslant (\|\nabla u_n\|_p^{p-1}-\|\nabla u_*\|_p^{p-1})(\|\nabla u_n\|_p-\|\nabla u_*\|_p)+\nonumber (\|\nabla u_n\|_q^{q-1}-\|\nabla u_*\|_q^{q-1})(\|\nabla u_n\|_q-\|\nabla u_*\|_q) \\
    &&
    \geqslant  0.
\end{eqnarray}
The left hand side of \eqref{chain_1} goes to $0$ as $n\to \infty$ due to \eqref{grad(u_n-u)_1} and \eqref{grad(u_n-u)_2} with $\theta =p,q$. Therefore 
\begin{equation}\label{Lim_norm_2}
    \lim_{n\to \infty }(\|\nabla u_n\|_p^{p-1}-\|\nabla u_*\|_p^{p-1})(\|\nabla u_n\|_p-\|\nabla u_*\|_p)=0
\end{equation}
and 
\begin{equation}\label{lim_norm_3}
    \lim_{n\to \infty }(\|\nabla u_n\|_q^{q-1}-\|\nabla u_*\|_q^{q-1})(\|\nabla u_n\|_q-\|\nabla u_*\|_q)=0.
\end{equation}

 We have from \eqref{Lim_norm_2} and \eqref{lim_norm_3} that
\begin{equation}\label{norm_grad_1}
    \lim_{n\to \infty}\|\nabla u_n\|_p = \|\nabla u_*\|_p \quad \text{and} \quad \lim_{n\to \infty}\|\nabla u_n\|_q = \|\nabla u_*\|_q. 
\end{equation}

Combining  Proposition \ref{prop-cu} and the fact that $u_n \to u_*$ in $L^p(\Omega)$, it follows that 

\begin{equation}\label{norm_c1p_1}
    \lim_{n\to \infty} \int_\Omega c_1 (x)|u_n|^p  = \int_\Omega c_1 (x)|u_*|^p 
\end{equation}
and
\begin{equation}\label{norm_c2p_2}
    \lim_{n\to \infty} \int_\Omega c_2 (x)|u_n|^q  = \int_\Omega c_2 (x)|u_*|^q .
\end{equation}
From \eqref{norm_grad_1}, \eqref{norm_c1p_1} and \eqref{norm_c2p_2} we get
\begin{equation}\label{conv_norm_1}
    \lim_{n\to \infty} \|u_n\|_{c_1,p}=\|u_*\|_{c_1,p} \quad \text{and} \quad     \lim_{n\to \infty} \|u_n\|_{c_2,q}=\|u_*\|_{c_2,q}.
\end{equation}
Taking into account that $W$ is uniformly convex (see  \cite[Proposition 2.4.7]{GP2}),  \eqref{conv_norm_1} and the fact that $u_n \rightharpoonup u_*$ in $W$, we conclude from \cite[Proposition 3.32]{brezis} that $u_n \to u_*$ (strongly) in $W$. 
\end{proof}

Now, we can prove our main results. 

\vskip0.4cm 
\noindent
{\bf Proof of Theorem  \ref{main_theo}}\mbox{}
\vskip0.3cm
\noindent
We will prove the theorem in three steps.\\
\vskip0.3cm
\noindent
\textbf{Step 1}:  $J$ is coercive, that is, $J(u)\to \infty$ as $\|u\|_{W}\to \infty$. Define $\kappa:=\max\{\lambda, \mu\}$.   By hypothesis $\kappa<\Lambda_q$. From the continuous embedding of $W$ into $L^q(\Omega)$ and the continuity of the trace operator from $W$ into $L^q(\partial \Omega)$ (see \eqref{trace_ineq}), there exists $C_q>0$ such that 
\begin{equation}\label{embedd1}
\|u\|_q^q+\|u\|_{\partial\Omega, q}^q \leqslant C_q \|u\|_{c_2, q}^q, \quad \text{for all } u\in W,\end{equation}
where $C_q$ is a constant independent of $u$. Since  $\kappa<\Lambda_q$,
then there exists $\varepsilon >0$ such that 
\begin{equation}\label{eps_kap_lam}
    1-\varepsilon C_q -\frac{\kappa}{\Lambda_q}>0.
\end{equation}
From \eqref{hypoth_FG}  and conditions (C3)--(C5), there exist 
      $M_{\varepsilon}>0$ such that for all  $x\in \overline{\Omega}$ and all $u\in \mathbb{R}$ 
    \begin{eqnarray}\label{qF_1}
        F(x,u) &\leqslant &\frac{1}{q}(\lambda m(x) + \varepsilon)|u|^q +M_\varepsilon \qquad \text{and} \\
        G(x,u) &\leqslant & \frac{1}{q}(\mu \rho (x) + \varepsilon)|u|^q +M_{\varepsilon} .\label{qG_2}
    \end{eqnarray}
Therefore,
    from \eqref{def_J}, \eqref{qF_1} and \eqref{qG_2} we have   \begin{equation}\label{J_coerciv_1a}
      \begin{aligned}
        J(u) &\geqslant  
        \frac{1}{p}\|u\|_{c_1,p}^p+\frac{1}{q}\|u\|_{c_2,q}^q- \frac{\kappa}q \left(\int_\Omega  m(x) |u|^q   +\int_{\partial \Omega} \rho(x) |u|^q\right) \\
        &  -\frac{\varepsilon}{q}(\|u\|_q^q+\|u\|_{\partial\Omega ,q}^q)- \hat{M}_\varepsilon, 
              \end{aligned}
    \end{equation}
where $\hat{M}_\varepsilon:=\frac{M_\varepsilon}{q}(|\Omega|+|\partial\Omega|)$.

Using \eqref{embedd1}, \eqref{Lambda_q} and \eqref{def-K_q}, we get
\begin{equation}\label{Ju_upperbound}
    \begin{aligned}
        J(u)         &  \geqslant \frac{1}{p}(1-\varepsilon C_q)\|u\|_{c_1,p}^p+\frac{1}{q}(1-\varepsilon C_q)\|u\|_{c_2,q}^q- \frac{\kappa}q K_q(u)  - \hat{M}_\varepsilon\\     
        & \geqslant \left(1-\varepsilon C_q-\frac{\kappa}{ \Lambda_q}\right)\left(\frac{1}{p}\|u\|_{c_1,p}^p+\frac{1}{q}\|u\|_{c_2,q}^q\right) - \hat{M}_\varepsilon.
    \end{aligned}
\end{equation}
By \eqref{eps_kap_lam}, we have that  $1-\varepsilon C_q-\frac{\kappa}{ \Lambda_q}>0$. It follows from \eqref{Ju_upperbound} that either $J(u)\ge \left(1-\varepsilon C_q-\frac{\kappa}{ \Lambda_q}\right)\frac{1}{p}\|u\|_{c_1,p}^p - \hat{M}_\varepsilon$ if $u\in W=W^{1,p}(\Omega)$ or $J(u)\ge \left(1-\varepsilon C_q-\frac{\kappa}{ \Lambda_q}\right)\frac{1}{q}\|u\|_{c_2,q}^q  - \hat{M}_\varepsilon$ if $u\in W=W^{1,q}(\Omega)$. In either case, $J(u)\to \infty$ as $\|u\|\to \infty$. Hence,   $J$ is coercive.
\vskip0.2cm
\noindent
\textbf{Step 2:} We will prove that $J$ satisfies the Palais-Smale condition. Let $(u_n)$ be  a sequence  in $W$   such that $(J(u_n))$  is bounded and $ J' (u_n) \to 0$ as $n\to \infty$. We need to show that $(u_n)$ has a convergent subsequence.  In view of Proposition \ref{bound-PS}, it suffices to prove that $(u_n)$ is bounded in $W$. Let $M_1>0$ be such that $|J(u_n)|\leqslant M_1$  for all $n\in \mathbb{N}$. Therefore, using the definition of $J$, we have that 
\begin{equation}\label{J_bounded1}
     \frac{1}{p}\|u_n\|_{c_1, p}^p+\frac{1}{q}\|u_n\|_{c_2, q}^q\leqslant  \int_\Omega F(x,u_n) +\int_{\partial\Omega} G(x,u_n)  + M_1  .
 \end{equation}
We claim that $(u_n)$ is bounded in $W$.
From  \eqref{embedd1}, \eqref{qF_1}, \eqref{qG_2}
and \eqref{J_bounded1}, the definition of $\tilde\Lambda_q$ given in \eqref{Lambda_q2} and Lemma \ref{0<lambda_q} (ii) and taking into account the fact that $\kappa=\max\{\lambda, \mu\} <\Lambda_q$, we get
\begin{equation}\label{J_bounded2}
\begin{aligned}
        \frac{1}{p}\|u_n\|_{c_1, p}^p+\frac{1}{q}\|u_n\|_{c_2, q}^q & \leqslant 
        \frac{\lambda}{q} \int_\Omega m(x)|u_n|^q +\frac{\mu}{q} \int_{\partial\Omega}  \rho(x)|u_n|^q + \frac{\varepsilon}{q}(\|u_n\|_q^q+\|u_n\|_{\partial\Omega,q}^q) +\tilde{M}_\varepsilon 
        \\
        &\leqslant \frac{\kappa}{\Lambda_q} \left(  \frac{1}{p}\|u_n\|_{c_1, p}^p+\frac{1}{q}\|u_n\|_{c_2, q}^q \right) +\varepsilon C_q\left(  \frac{1}{p}\|u_n\|_{c_1, p}^p+\frac{1}{q}\|u_n\|_{c_2, q}^q \right)+\tilde{M}_\varepsilon.
 \end{aligned}
\end{equation}
where $\tilde{M}_\varepsilon:=M_\varepsilon |\Omega| + M_\varepsilon |\partial\Omega|+M_1 $. Consequently, from \eqref{J_bounded2}
\begin{equation}\label{un-bound2}
    \begin{aligned}
        \left(1-\frac{\kappa}{\Lambda_q}-\varepsilon C_q\right)\left(  \frac{1}{p}\|u_n\|_{c_1, p}^p+\frac{1}{q}\|u_n\|_{c_2, q}^q \right)\leqslant \tilde{M}_\varepsilon.
    \end{aligned}
\end{equation}
From \eqref{eps_kap_lam} and \eqref{un-bound2} we see that $(u_n)$ is bounded in $W$. By Proposition \ref{bound-PS} it follows that $J$ satisfies the Palais-Smale condition.
\vskip0.2cm
\noindent
\textbf{Step 3:} We will prove that Problem \eqref{nonlinpde} has at least one weak solution.
Indeed, by Propositions \ref{prop-nabla} and \ref{prop-cu}, and hypotheses (C3)-(C5) we have that $J\in C^1$. Furthermore, $J$ is coercive, by Step 1, and by Step 2 $J$ satisfies the Palais-Smale condition. Therefore, by virtue of \cite[Theorem 2.7]{PR} $J$ has a critical point $u_0$. Moreover, $\displaystyle J(u_0)=\inf_W J$. Thus, $u\in W$  is a weak solution of \eqref{nonlinpde}. This completes the proof. \qed

\vskip0.4cm 
\noindent
{\bf Proof of Theorem  \ref{main_theo2}}\mbox{}

\noindent

We will prove it in three steps.

\noindent
\textbf{Step 1}:  $J$ is coercive. 
     By hypotheses \eqref{hypoth_FG_2} and \eqref{hypoth_FG_3}, there exists $\varepsilon >0$ such that 
    \begin{equation}\label{restric_2}
    \lambda +\varepsilon <\Pi_q, \qquad \mu +\varepsilon <\Gamma_q   
    \end{equation}
     and  \begin{equation}\label{restric_1}
        1-\frac{\lambda+\varepsilon}{{\Pi}_q}-\frac{\mu+\varepsilon}{{\Gamma}_q}>0.
    \end{equation}

From \eqref{hypoth_FG_2}  and conditions (C3)--(C5),  there exists $M_{\varepsilon}>0 $ such that for all $x\in \Omega$ and for all $u\in \mathbb{R}$
    \begin{eqnarray}\label{qF}
        F(x,u) &\leqslant &\frac{1}{q}(\lambda + \varepsilon)|u|^q +{M}_\varepsilon \qquad \hbox{and}\\
        G(x,u) &\leqslant & \frac{1}{q}(\mu + \varepsilon )|u|^q+ M_{\varepsilon}.\label{qG}
    \end{eqnarray}
Using \eqref{def_J}, \eqref{qF}  and \eqref{qG} we have that
\begin{equation}\label{J_coerciv_1}
        J(u) \geqslant  \frac{1}{p}\|u\|_{c_1,p}^p+\frac{1}{q}\|u\|_{c_2,q}^q - \frac{\lambda+\varepsilon}{q} \|u\|_q^q- \frac{\mu+\varepsilon}{q}\|u\|_{\partial \Omega,q}^q - M_\varepsilon(|\Omega|+|\partial\Omega|).
    \end{equation}

Observe that
 the definition \ref{Lambda_q} and Lemma \ref{0<lambda_q} (ii) implies that
  for all $u\in W$
\begin{equation}\label{J_coerciv_2}
   \frac{\lambda+\varepsilon}{q} \|u\|_q^q\leqslant \frac{\lambda+\varepsilon}{{\Pi}_q}\left(\frac{1}{p}\|u\|_{c_1,p}^p+\frac{1}{q}\|u\|_{c_2,q}^q\right)
 \end{equation}
whenever $\lambda+\varepsilon\geqslant 0$, and
\begin{equation}\label{J_coerciv_3}
     \frac{\mu+\varepsilon}{q} \|u\|_{\partial \Omega,q}^q \leqslant \frac{\mu+\varepsilon}{{\Gamma}_q}\left(\frac{1}{p}\|u\|_{c_1,p}^p+\frac{1}{q}\|u\|_{c_2,q}^q\right) ,
\end{equation}
 provided that $\mu+\varepsilon \geqslant 0$.
    To prove the coercivity of $J$ we shall analyze 4 cases. 
    \begin{itemize}
        \item[Case 1:] $\lambda+\varepsilon, \mu+\varepsilon \geqslant 0$.\\
From \eqref{J_coerciv_1}, \eqref{J_coerciv_2} and \eqref{J_coerciv_3}, we obtain
\begin{align}\label{J_coerciv_4}
    J(u) &\geqslant 
    \frac{1}{p}\left( 1- \frac{\lambda+\varepsilon}{ {\Pi}_q}-\frac{\mu+\varepsilon}{ {\Gamma}_q} \right)\|u\|_{c_1,p}^p+\frac{1}{q}\left( 1- \frac{\lambda+\varepsilon}{ {\Pi}_q}-\frac{\mu+\varepsilon}{ {\Gamma}_q} \right)\|u\|_{c_2,q}^q-C_\varepsilon,
\end{align}
where $C_\varepsilon:=M_\varepsilon (|\Omega|+|\partial\Omega|)$. If $p > q$, then $r=p$ and $\|u\|_{c_1,p}=\|u\|_{W}$. On the other hand if $p<q$ then $r=q$ and $\|u\|_{c_2,q}=\|u\|_{W}$. 
Taking into account \eqref{restric_1}, we obtain from \eqref{J_coerciv_4} that \begin{equation*}\label{J_coerciv_5}
     J(u) \geqslant \frac{1}{r}\left( 1- \frac{\lambda+\varepsilon}{ {\Pi}_q}-\frac{\mu+\varepsilon}{ {\Gamma}_q} \right)\|u\|_{W}^r-C_\varepsilon.
 \end{equation*}
 Hence $J(u) \to \infty$ as $\|u\|_{W}\to \infty$. Thus $J$ is coercive.  
\item[Case 2:] $\lambda+\varepsilon \geqslant 0$ and $\mu+\varepsilon<0$.\\ 
From \eqref{J_coerciv_1} we have
 \begin{equation*}
     J(u) \geqslant \frac{1}{p}\|u\|_{c_1,p}^p+\frac{1}{q}\|u\|_{c_2,q}^q - \frac{\lambda+\varepsilon}{q} \|u\|_q^q -C_\varepsilon.
 \end{equation*}
Applying \eqref{J_coerciv_2} we get  \begin{equation}\label{J_coerciv_6}
   J(u)\geqslant  \left( 1- \frac{\lambda+\varepsilon}{ {\Pi}_q}\right)\frac{1}p\|u\|_{c_1,p}^p+\left( 1- \frac{\lambda+\varepsilon}{ {\Pi}_q}\right)\frac{1}{q}\|u\|_{c_2,q}^q-C_\varepsilon .
\end{equation}
 By \eqref{restric_2}, we have that $ 1- \frac{\lambda+\varepsilon}{ {\Pi}_q}>0$. Then arguing as in case 1 we have that $J$ is coercive. 
 \item[Case 3:]
 $\mu +\varepsilon \geqslant 0$ and $\lambda +\varepsilon <0$. Arguing as in the case 2, we obtain \eqref{J_coerciv_6} which implies the coercivity of $J$.\\ 
\item[Case 4:]  $\lambda+\varepsilon, \mu+\varepsilon < 0$.\\ 
From \eqref{J_coerciv_1} we obtain
 \begin{equation*}
     J(u) \geqslant \frac{1}{p}\|u\|_{c_1,p}^p+\frac{1}{q}\|u\|_{c_2,q}^q  -C_\varepsilon.
 \end{equation*}
 Then, arguing as in case 1 we conclude that $J$  is coercive. 
    \end{itemize}
Each of these cases led to   $J$ being  coercive.
\vskip0.2cm
\noindent
\textbf{Step 2}:  $J$ satisfies the Palais-Smale condition.
Indeed, let $(u_n)$ be a sequence in $W$   such that $(J(u_n))$  is bounded and $ J' (u_n) \to 0$, as $n\to \infty$.  By Proposition \ref{bound-PS}, it suffices to prove that $(u_n)$ is bounded in $W$. Since $(J(u_n))$  is bounded then there exists a constant $M_1>0$ such that $|J(u_n)|\leqslant M_1$  for all $n\in \mathbb{N}$. 
\\
Using \eqref{def_J} we get
\begin{equation}\label{J_bounded}
     \frac{1}{p}\|u_n\|_{c_1, p}^p+\frac{1}{q}\|u_n\|_{c_2, q}^q\leqslant  \int_\Omega F(x,u_n) +\int_{\partial\Omega} G(x,u_n)  + M_1  .
 \end{equation}
 In order to prove that $(u_n)$ is bounded, we shall consider 4 cases. As in the proof of the coercivity of $J$ we take $\varepsilon >0$  that  satisfies \eqref{restric_2} and \eqref{restric_1}. 
\begin{itemize}
    \item[Case 1:] 
$\lambda+\varepsilon,\mu+\varepsilon\geqslant 0$.\\
Using \eqref{J_bounded} combined with \eqref{qF}, \eqref{qG},  \eqref{J_coerciv_2} and \eqref{J_coerciv_3}, we get
 \begin{eqnarray*}
     \frac{1}{p}\|u_n\|_{c_1, p}^p+\frac{1}{q}\|u_n\|_{c_2, q}^q 
     &\leqslant & \frac{\lambda+\varepsilon}{q} \|u_n\|_q^q+\frac{\mu+\varepsilon}{q} \|u_n\|_{\partial \Omega,q}^q+\frac{M\varepsilon}{q}|\Omega|+\frac{M\varepsilon}{q}|\partial\Omega|+M_1\\
     &\leqslant & \left( \frac{\lambda+\varepsilon}{ {\Pi}_q}+\frac{\mu+\varepsilon}{ {\Gamma}_q} \right)\left( \frac{1}{p} \|u_n\|_{c_1, p}^p+\frac{1}{q} \|u_n\|_{c_2, q}^q\right)+M_2 ,
 \end{eqnarray*}
 where $M_2:=\frac{M\varepsilon}{q}|\Omega|+\frac{M\varepsilon}{q}|\partial\Omega|+M_1$.
 Therefore
 \begin{align}\label{u_n-bound1}
 \frac{1}r \left(1-\frac{\lambda+\varepsilon}{ {\Pi}_q}-\frac{\mu+\varepsilon}{ {\Gamma}_q}\right) \|u_n\|_{W}^r  \leqslant  \left( 1-\frac{\lambda+\varepsilon}{ {\Pi}_q}-\frac{\mu+\varepsilon}{ {\Gamma}_q} \right) \left( \frac{1}{p} \|u_n\|_{c_1, p}^p +  \frac{1}{q} \|u_n\|_{c_2, q}^q \right) \leqslant  M_2.
 \end{align}
Observe that $1-\frac{\lambda+\varepsilon}{ {\Pi}_q}-\frac{\mu+\varepsilon}{ {\Gamma}_q}>0$ as \eqref{restric_1} holds. Therefore, 
\eqref{u_n-bound1} implies that $(u_n)$ is bounded in $W$. \\

\item[Case 2:]  $\lambda+\varepsilon \geqslant 0$ and $\mu+\varepsilon <0$.\\
From   \eqref{qG} and \eqref{J_bounded} we have that
\begin{eqnarray*}
       \frac{1}{p}\|u_n\|_{c_1, p}^p+\frac{1}{q}\|u_n\|_{c_2, q}^q &\leqslant &  
       \frac{\lambda +\varepsilon}{q}\|u_n\|_q^q  + M_\varepsilon|\Omega| +|\partial\Omega| M_\varepsilon  + M_1.
\end{eqnarray*}
Using the fact $\lambda+\varepsilon \geqslant0$ and  \eqref{J_coerciv_2} holds, we get that 
\begin{equation*}
     \frac{1}{p}\|u_n\|_{c_1, p}^p+\frac{1}{q}\|u_n\|_{c_2, q}^q \leqslant  \frac{\lambda+\varepsilon}{ {\Pi}_q}\left(\frac{1}{p}\|u_n\|_{c_1,p}^p +\frac{1}{q}\|u_n\|_{c_2,q}^q\right)+M_3,
\end{equation*}
where $M_3:=M_\varepsilon |\Omega| +|\partial\Omega| M_\varepsilon  + M_1.$
We   obtain
\begin{equation*}
    \frac{1}r \left(1-\frac{\lambda+\varepsilon}{ {\Pi}_q}\right) \|u_n\|_{W}^r\leqslant   \left(1-\frac{\lambda+\varepsilon}{ {\Pi}_q}\right)   \left(\frac{1}{p}\|u_n\|_{c_1,p}^p +\frac{1}{q}\|u_n\|_{c_2,q}^q\right)\leqslant M_3, 
\end{equation*}
Observe that $1-\frac{\lambda+\varepsilon}{ {\Pi}_q}>0$  by 
 \eqref{restric_1}. Thus, $(u_n)$ is bounded in $W$.
\\
\item[Case 3:]  
$\lambda+\varepsilon < 0$ and $\mu+ \varepsilon\geqslant 0$. Arguing as in the case 2, we obtain,
 with $M_4:=\frac{M\varepsilon}{q}|\Omega| +|\partial\Omega| \frac{M\varepsilon}{q}  + M_1$,
\begin{equation*}       \frac{1}r\left(1-\frac{\mu+\varepsilon}{ {\Gamma}_q} \right) \|u_n\|_{W}^r\leqslant\left(1-\frac{\mu+\varepsilon}{ {\Gamma}_q}\right)   \left(\frac{1}{p}\|u_n\|_{c_1,p}^p +\frac{1}{p}\|u_n\|_{c_2,q}^q\right)\leqslant M_4.
\end{equation*}
Note that $1-\frac{\mu+\varepsilon}{ {\Gamma}_q}>0$ by 
 \eqref{restric_1}. Thus, $(u_n)$ is bounded in $W$.

\item[Case 4:]
 $\lambda+\varepsilon<0$ and $\mu+\varepsilon<0.$\\  From \eqref{qF}, \eqref{qG} and \eqref{J_bounded}, we have 
\begin{equation*}
 \frac{1}r \|u_n\|_{W}^r\leqslant   \frac{1}{p}\|u_n\|_{c_1, p}^p+\frac{1}{q}\|u_n\|_{c_2, q}^q \leqslant  M_4. 
\end{equation*}
Therefore $(u_n)$ is bounded in $W$. 
\end{itemize}
Since $(u_n)$ is bounded in $W$, it follows from Proposition \ref{bound-PS}  that $J$ satisfies the Palais-Smale condition. 

\vskip0.2cm
\noindent
\textbf{Step 3}: We show that Problem \eqref{nonlinpde} has at least one weak solution.
Indeed, by Propositions \ref{prop-nabla} and \ref{prop-cu}, and hypotheses $(C3)-(C5)$ we have that $J\in C^1$. Furthermore, $J$ is coercive, by Step 1, and by Step 2 satisfies the Palais-Smale condition. Therefore, by virtue of \cite[Theorem 2.7]{PR} $J$ has a critical point $u_0$. Moreover, $\displaystyle J(u_0)=\inf_W J$. Thus, $u\in W$  is a weak solution of \eqref{nonlinpde}. This completes the proof. \qed

  \bmhead{Acknowledgements}

 The second author  was  supported by the AMS-Simons Research Enhancement Grant for PUI Faculty.\\
 The first and third authors  were  supported by Facultad de Ciencias Exactas y Naturales, Universidad Nacional de Colombia, Sede Manizales – Facultad de Ciencias – Departamento de Matem\'aticas – Grupo de investigaci\'on: An\'alisis Matem\'atico AM de la Universidad Nacional de Colombia-Sede Manizales, through the following projects: Acerca de la Topolog\'i a Algebraica y sus aplicaciones a las Ecuaciones Diferenciales. Hermes code 63636. 
 Ciencia y Tecnolog\'ia para la Calidad en la Industria Licorera de Caldas (CalCIL): Un Enfoque desde la Facultad de Ciencias Exactas y Naturales, Hermes code  63945. Problemas en ecuaciones elípticas no lineales, Hermes code 63271. Support was also 
 received from 
 Ministerio de Ciencia Tecnolog\'ia en Innovaci\'on-Fondo Francisco Jos\'e de Caldas-Convocatoria de estancias post-doctorales de diplomacia cient\'ifica en el exterior para doctores colombianos 2022, code 928-Project: Estancia de Investigaci\'on Post Doctoral en Diplomacia Cient\'ifica con \'enfasis en Matem\'aticas.
\\

\textbf{Data availability:} The manuscript has no associated data.

\section*{Declarations}

\textbf{Conflict of interest:} The authors declare that they have no conflict of interest.


\bibliography{sn-bibliography}

\end{document}